\pgfplotsset{compat=1.5}
\newcommand{\tsum}{\textstyle \sum}
\newcommand{\Z}{\mathbb Z}
\newcommand{\tf}{\tfrac}
\newcommand{\inv}{^{-1}}
\newcommand{\f}{\frac}
\newcommand{\ind}[1]{\mathbf{1}{\{#1\}} }
\newcommand{\xto}{\overset{\mathcal X} \to}
\definecolor{dark_green}{RGB}{1, 180, 1}
\renewcommand{\l}{\ell}
\newcommand{\xnorm}[2][]{#1 \| #2 #1 \|_{x^{-2}}}
\newcommand{\dnorm}[2][]{#1 \| #2 #1 \|_{x^{-1 - \delta}}}
\theoremstyle{plain}
\newtheorem{thm}{Theorem}
\newtheorem{lemma}[thm]{Lemma}
\newtheorem{prop}[thm]{Proposition}
\newtheorem{cor}[thm]{Corollary}
\theoremstyle{remark}
\newtheorem{remark}[thm]{Remark}
\newcommand{\ep}{\alpha}
\newcommand{\al}{\alpha}
\newcommand{\CC}{\mathscr C}
\DeclareMathOperator{\sgn}{sgn}
\newcommand{\tF}{{\alpha \tilde F^{\Psi}_t}}
\renewcommand{\tilde}{\widetilde}
\DeclareDocumentCommand \D { o }
{%
\IfNoValueTF {#1}
{D}
{\partial_{{#1}}} 
}%
\newcommand{\uniform}{\emph{uniform }}
\begin{document}

%!TEX root = frog_paper.tex
\title%[Interval splitting methods equidistribute points]
{Choices, intervals and equidistribution}

\author{Matthew Junge}
\address{Department of Mathematics, University of Washington}
\email{jungem@math.washington.edu}

%\begin{abstract}
%We show that random sequences generated by a family of ``choice'' interval splitting processes are equidistributed.
%This includes the max-2 process as well as more general mixtures,
%solving an open problem from Maillard and Paquette.  
%\end{abstract}

\begin{abstract}
%An open problems from Benjamini, Maillard and Paquette the $\max$-2 process. This is a random sequence in [0,1] where the $n$th term is whichever of two uniformly placed points falls in the larger gap between the previous $n - 1$ points. We prove the $\max$-2 process and others processes like it are a.s.\ equidistributed. 
%This solves an open problem from . We also prove equidistribution for more general ``choice" sequences."

%Partition the [0,1] into finitely many subintervals. 
We give a sufficient condition for a random sequence in [0,1] generated by a $\Psi$-process to be equidistributed. The condition is met by the canonical example -- the $\max$-2 process -- where the $n$th term is whichever of two uniformly placed points falls in the larger gap formed by the previous $n-1$ points. This solves an open problem from Itai Benjamini, Pascal Maillard and Elliot  Paquette. 
We also deduce equidistribution for more general $\Psi$-processes. This includes an interpolation of the $\min$-2 and $\max$-2 processes that is biased towards $\min$-2. 
\end{abstract}

%\begin{abstract}
%The sequence of points added to [0,1] in a family of mixtures between max-$2$, uniform and min-$2$ processes is equidistributed a.s. This confirms a conjecture from Maillard and Paquette. The family includes the max-2 process and mixtures intuitively worse at equidistributing points than the uniform process. We establish several properties for general mixtures of max-$k$, uniform and min-$k$ processes.
%\end{abstract}

\maketitle

\section{Introduction}

%We study the placement of points in a family of stochastic processes from \cite{elliot}.
% We study the distribution of certain random sequences in $[0,1]$. 
 A sequence in $[0,1]$ is \emph{equidistributed} if the limiting proportion of points in each subinterval is equal to the subinterval's length.
 %the interval's length.
  %Interest in this phenomenon
Over a century ago Weyl proved that $\{ \beta n \mod 1\}_{n \geq 1}$ is equidistributed for any irrational number $\beta$ (see \cite{weyl}). 
 %Other notable examples of equidistributed sequences include $\{\beta^n \mod 1\}_{n \geq 1}$ for a.e. $\beta >1$ and $\{p\beta \mod 1\}_{p \text{ prime} }$ (\cite{powers} and ). 
Since then connections 
%to equidistribution
 have been found in ergodic theory, number theory, complex analysis and computer science (\cite{ergodic}, \cite{primes}, \cite{complex}, \cite{computer}). See \cite{uniform} for an overview.
 %The distribution of $\{e^n \mod 1\}_{n \geq 1}$ is unknown. 
 %For instance, the distribution of $\{e^n \mod 1\}_{n \geq 1}$ remains an open problem. 
 
Not long after Weyl's Theorem, %\cite{elliot} 
 attention turned to equidistribution of random sequences. 
One way to obtain a random sequence in $[0,1]$ is to independently choose points uniformly. Call the resulting sequence the \emph{uniform process}. The strong law of large numbers guarantees this is equidistributed almost surely. 

Another random process known to equidistribute points is the \emph{Kakutani interval splitting procedure} (introduced in \cite{kakutani0}), where at each step a point is added uniformly to the current largest subinterval. Almost sure equidistribution is proven in \cite{kakutani} and \cite{Loot} using stopping times. Because points are placed in the largest gaps they ought to spread more evenly than the uniform process. Indeed, \cite{pyke} proves the size of the largest interval is asymptotic to $2/n$; the same order as the average interval. Compare to $\log n / n$ in the uniform process (see \cite{uniformsize}). 
%This confirms that the Kakutani process is a superior way to randomly equidistribute points. 

\cite{elliot} introduces a family of interval splitting processes that exhibit a wider range of behavior.
The canonical example is the \emph{max-2 process}. The dynamics are as follows:
% where the method of selecting points is as follows:

%\item Start by partitioning $[0,1]$ into subintervals by placing finitely many points. 
    \begin{itemize}
        \item Partition $[0,1]$ into subintervals by placing finitely many points in any manner.
        \item At each step sample two points uniformly from $[0,1]$. Each lies in a subinterval formed by the previous configuration. 
        \item Keep the point contained in the larger subinterval and disregard the other point. Break a tie by flipping a fair coin.
    \end{itemize}
    
A discrete analogue of the $\max$-$2$ process appears in \cite{choices1} where $n$ balls are placed into $n$ bins. For each ball two bins are selected uniformly and the ball is placed in the bin with fewer balls. They find that the most-filled bin has $\approx \log_2 \log n$ balls; significantly less then $\approx \log n / \log \log n$ if the balls were instead placed uniformly. This is studied in more detail in \cite{power2} and \cite{choices2}.

In the $\max$-$2$ process choosing the larger gap should spread points more evenly. Despite our intuition this is difficult to formalize, and equidistribution was a primary open problem from \cite{elliot}. 
 The natural counterpart is the \emph{min-2 process} where the point contained in the smaller subinterval is kept. Unlike the previous processes, points are prone to clump together. It is natural to also define the $\max$-$k$ and $\min$-$k$ processes; in these the max or (resp.) min of $k$ candidate points is selected at each step.

%Define $N_n^\alpha$ to be the number of points contained in $[0,\alpha]$ after $n$ iterations. We say that a process \emph{is equidistributed} if $n \inv N_n^\alpha \to \alpha$ for all $\alpha \in [0,1]$.

Before we can state the theorem we describe a more general splitting procedure known as a \emph{$\Psi$-process} (introduced in \cite{elliot}). For technical convenience we will assume that points arrive according to a Poisson process with intensity $e^t$. Suppose at time $t$ that $N_t$ points have arrived and we have interval lenghts $I_1^{(t)}, I_2^{(t)}, \hdots, I_{N_t}^{(t)}$. Define the size-biased empirical distribution function
$$\tilde A_t(x) = \sum_{i=1}^{N_t} I_i^{(t)} \ind{ I_i^{(t)} \leq x }.$$
This function is now defined to evolve according to Markovian dynamics as follows. Let us say that the next point arrives at time $s>t$, for the $N_s$-th step (with $N_s = N_t+1$) we choose an interval at random, with length $\l_{s} = \tilde A_{s^-}^{-1}(u)$, where $u$ is sampled from a law on $(0,1]$ whose distribution function we denote by $\Psi$. This randomly chosen interval is now subdivided into two pieces at a point chosen uniformly inside the interval. This produces a new sequence of interval lengths $I_1^{(s)}, \hdots I_2^{(s)}, \hdots , I^{(s)}_{ N_s }$ and the process is repeated. Note that $\tilde A_t(x)$ is constant (in $t$) between point arrivals. We remark that the $\max$-$k$, uniform and $\min$-$k$ processes are $\Psi$-processes with $\Psi(u) = u^k, u,$ and $1 -(1-u)^k,$ respectively. 

We abbreviate a few common assumptions for $\Psi$:
\begin{align*}
\text{(C)} & \; \Psi \text{ is continuous.}\\
\text{(C$^1$)} & \; \Psi \text{ is continuously differentiable.}\\
\text{(C$^2$)} & \; \Psi \text{ is twice continuously differentiable.}\\
\text{(D)} & \text{ There exist $c >0$ and $\kappa _\Psi \in [1, \infty)$, such that }  1 - \Psi(u) \geq c(1- u)^{\kappa_\Psi} \text{ for all $u \in (0,1)$.}
\end{align*}
Set $A_t(x) = \tilde A_t(e^{-t}x)$. The main theorem of \cite{elliot} proves that, when (C) and (D) hold, $A_t(x)$ converges pointwise to a (deterministic) continuously differentiable distribution function $F^\Psi(x).$ 
For future theorem statements we note that (C$^1$) and (C$^2$) both imply (D).

Here we study $\tilde A_t^\alpha$, the restriction of $\tilde A_t$ to the $N_t^\alpha$ subintervals contained in $[0,\alpha]$. We find conditions on $\Psi$ that guarantee pointwise convergence $A^\alpha_t \to \alpha F^\Psi$, where $A_t^\alpha(x) = \tilde A_t^\alpha (e^{-t}x)$ and $\alpha F^\Psi$ denotes the map $x \mapsto \alpha \cdot F^\Psi(x)$. When this holds the subinterval lengths in $[0,\alpha]$ evolve to look the same as those in all of [0,1]. This sameness is enough to deduce equidistribution.
% The norm $$\xnorm{f} = \int_0^\infty x^{-2} |f(x)| dx, \qquad f \in L^1_{\text{loc}}([0,\infty))$$ is used extensively in \cite{elliot}, and remains useful here. A key property (see \thref{master} \ref{rate}) is  that $\xnorm{ \tilde A_t^\alpha } = e^{-t} N_t^\alpha.$ Thus, we can recover the number of points added to the interval $[0,\alpha]$, which is the fundamental quantity for proving equidistribution.  
%	Our main theorem is a sufficient condition for equidistribution in terms of $\Psi$ and $F^\Psi$.
\begin{thm}\thlabel{thm:eqd}
Let $\psi = \Psi'$. If $\Psi$ satisfies \emph{(C$^2$)}
 and for some $\delta \in (0,1]$ and all $z \geq 0$
	\begin{align} |z\psi'(F^\Psi(z))  (F^\Psi)'(z) - \psi( F^\Psi(z))|  \leq (2 - \delta) \psi( F^\Psi(z) ) \label{eq:inequality},
\end{align}
	then the $\Psi$-process is equidistributed a.s.\
\end{thm}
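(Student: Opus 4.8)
The plan is to show that $A_t^\alpha \to \alpha F^\Psi$ pointwise a.s.\ for every $\alpha \in (0,1)$, since (as the introduction explains) this ``sameness'' of the restricted empirical measure forces the limiting proportion of points in $[0,\alpha]$ to be exactly $\alpha$, i.e.\ equidistribution. Following the strategy of \cite{elliot}, I would first derive the stochastic evolution equation governing $\tilde A_t^\alpha$. When a new point arrives at time $s$, an interval of length $\l_s = \tilde A_{s^-}^{-1}(u)$ is selected with $u \drawnfrom \Psi$ and split uniformly; conditioning on the event that the selected interval lies in $[0,\alpha]$ (probability governed by $\tilde A_{s^-}^\alpha$ relative to the global $\tilde A_{s^-}$) and summing the contribution of the two new subintervals, one obtains a drift term for $A_t^\alpha$ together with a martingale remainder. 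Passing to the rescaled variable $x \mapsto e^{-t}x$ and taking expectations, the natural guess is that the deterministic limit $G(x) = \lim A_t^\alpha(x)$ must satisfy the same fixed-point ODE that characterizes $F^\Psi$ in \cite{elliot}, but with the total mass constraint $G(\infty) = \alpha$ instead of $1$; by uniqueness of that fixed point under the stated hypotheses, $G = \alpha F^\Psi$.

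The key steps, in order, would be: (i) write the semimartingale decomposition $A_t^\alpha(x) = A_0^\alpha(x) + \int_0^t (\text{drift}) + M_t(x)$, with the drift expressed through $\psi$, $F^\Psi$ and the (a priori unknown) limit, and with $M_t(x)$ a martingale whose bracket I would bound to show $M_t(x) \to 0$ a.s.\ (the Poissonization and the $e^t$ intensity are exactly what make the quadratic variation summable); (ii) show that the family $\{A_t^\alpha(x)\}_t$ is tight / relatively compact and that any subsequential limit is deterministic and continuously differentiable, using (C$^2$) to get the requisite regularity of the drift; (iii) identify the limiting ODE and invoke a uniqueness statement for it. This last point is where inequality \eqref{eq:inequality} enters: the fixed-point map for the limiting profile is a contraction (or the linearization around $F^\Psi$ is a strict contraction) precisely when the quantity $|z\psi'(F^\Psi(z))(F^\Psi)'(z) - \psi(F^\Psi(z))|$ is dominated by $(2-\delta)\psi(F^\Psi(z))$; the factor $2$ comes from the two children produced at each split, and the $\delta$-slack is what upgrades a non-strict contraction to a genuine one, yielding uniqueness and hence $A_t^\alpha \to \alpha F^\Psi$.

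I expect the main obstacle to be step (iii): controlling the limiting ODE and proving that $\alpha F^\Psi$ is its \emph{unique} solution with the right normalization. The difficulty is that the drift for $A_t^\alpha$ couples the restricted profile to the global profile $F^\Psi$ through the interval-selection mechanism $\tilde A_{s^-}^{-1}$, so the equation is not autonomous in $A^\alpha$ alone; one must first invoke the \cite{elliot} result to pin down the global limit $F^\Psi$, then feed it in as a known coefficient, and only then run a Gronwall-type argument. Making the contraction estimate \eqref{eq:inequality} interact correctly with the change of variables $u = F^\Psi(z)$ — so that the hypothesis on $\psi$ at argument $F^\Psi(z)$ translates into a bound on the relevant integral operator — is the delicate computation, and it is essentially the reason the theorem needs (C$^2$) rather than merely (C) and (D). A secondary technical point is verifying that $\tilde A_t^{-1}$ and its truncations are well-enough behaved (e.g.\ that the relevant inverses are continuous) to justify the manipulations; this should follow from continuity of $\Psi$ together with the a.s.\ distinctness of the interval lengths.
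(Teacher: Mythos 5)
Your overall architecture is the one the paper uses: a semimartingale decomposition of $\tilde A_t^\alpha$, decay of the martingale part, tightness and asymptotic equicontinuity to extract subsequential limits, identification of any such limit as a fixed point of the operator obtained by substituting the known global limit $F^\Psi$ for the coupling term, and finally a contraction estimate driven by \eqref{eq:inequality}. These are exactly \thref{prop:formula}, \thref{master} and \thref{prop:cond1}. One small misattribution: the factor $(2-\delta)$ does not come from ``the two children produced at each split''; it comes from $\int_0^z x^{1-\delta}\,dx = z^{2-\delta}/(2-\delta)$ in a Fubini step after integrating by parts in the weighted norm $\dnorm{\cdot}$, which is why the theorem gains flexibility as $\delta$ varies.

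The genuine gap is in your very first sentence: pointwise convergence $A_t^\alpha \to \alpha F^\Psi$ does \emph{not} by itself force the limiting proportion of points in $[0,\alpha]$ to be $\alpha$. The process $A_t^\alpha$ is the \emph{size-biased} empirical distribution, weighting each interval by its length; the point count is recovered only through the singular integral $e^{-t}N_t^\alpha = \xnorm{A_t^\alpha} = \int_0^\infty x^{-2}A_t^\alpha(x)\,dx$, and pointwise convergence of $A_t^\alpha$ gives no control of this integral near $x=0$ (mass could in principle escape into ever-smaller intervals, which carry negligible total length but unboundedly many points). The paper closes this with a two-sided squeeze: Fatou gives $\liminf_t \xnorm{A_t^\alpha} \geq \xnorm{\alpha\hat F} = \alpha$, and applying the same convergence to the complementary process $A_t^{\alpha_+}$ via the identity $\tilde A_t^\alpha + \tilde A_t^{\alpha_+} = \tilde A_t$ yields $\limsup_t \xnorm{A_t^\alpha} \leq 1 - \liminf_t \xnorm{A_t^{\alpha_+}} \leq 1-(1-\alpha) = \alpha$. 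Without this step (or an equivalent uniform-integrability argument at the origin), your proof stops one step short of equidistribution.
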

%We then provide conditions on $\Psi$ and $F^\Psi$ that guarantee $\xnorm{A_t^\alpha - \alpha F^\Psi} \to 0$.

%\begin{prop} \thlabel{thm:sufficient}
%	Suppose \emph{(C$^2$)} and \emph{(D)} hold. Denote $\Psi ' = \psi$.  If for all $z \geq 0$ and some $\delta \in [0,1)$
%\begin{align} |z\psi'(\hat F(z)) \hat F'(z) - \psi(\hat F(z))|  \leq (2 - \delta) \psi(\hat F(z) ) \label{eq:inequality},
%\end{align}
%then $\xnorm{A^\alpha_t - \alpha F^\Psi} \leq 2(1+\delta \inv) e^{- \delta  t}$.
%\end{prop}
The condition \eqref{eq:inequality} arises from a technical computation (see the proof \thref{prop:cond1}) used to show that a family of processes containing $(A_t^\alpha)_{t \geq 0}$ contract in a certain norm. We stress that it is not at all obvious which $\Psi$ and $F^\Psi$ should satisfy this condition. Our only tools are the properties of $F^\Psi$ established in \cite{elliot}. Most importantly, it satisfies the integro-differential equation (see \cite[Lemma 3.5]{elliot}): 
\begin{align} 
(F^\Psi)'(z)  = z \int_z^\infty \f 1 y d \Psi( F^\Psi(y)), \label{eq:integro}
	\end{align}
and the differential equation (see \cite[Proposition 8.1]{elliot}):
\begin{align}
z  (F^\Psi)''(z) -  (F^\Psi)'(z)  + z \psi( F^\Psi(z))  (F^\Psi)'(z) = 0 \label{eq:diff}.
\end{align}
Remarkably, this is enough information to deduce \eqref{eq:inequality} holds for the $\max$-2 process,  an interpolation of $\max$-2 and $\min$-2 processes that is biased towards $\min$-2, and arbitrary interpolations of $\max$-$k$, uniform and $\min$-$k$ processes that place enough weight on the uniform process.  
 
  \begin{cor} \thlabel{cor:main} The following are equidistributed a.s.\
 \begin{enumerate}
 	\item The $\max$-$2$ process.
 	\item The interpolation that is $60\%$-$\min$-$2$ and $40\%$-$\max$-$2$; $\Psi(u) =  .6(1- (1-u)^2) + .4u^2$.
 	\item The interpolation of $\max$-$k$, uniform and $\min$-$k$ processes given by a probability measure $\mathbf p = (p_k)_{k \neq -1,0}$ on $\Z \setminus \{-1,0\}$, that satisfes $\sum_{k \geq 2} k (k-1)[ p_k + p_{-k}] \leq 1/2;$ 
 	$$\Psi(u) = p_1 u + \sum_{k \geq 2} p_k u^k + p_{-k} ( 1 - (1-u)^k).$$ 
 	For example, this includes the interpolations 
 	\begin{enumerate}
 	\item $(1/k^2)\%$-$\min$-$k$ for a single fixed $k$ and otherwise 
        %$100(1- 1/ {k^2})\%$-
        \emph{uniform}.
        \item $99.95\%$-\uniform and $(5^{-k})\%$-$\min$-$k$ for all $k=2,3,\hdots$.
    \end{enumerate}
% 	\item For an arbitrary interpolation of $\max$-$k$ and $\min$-$k$ processes that includes the uniform process with weight $1 -\epsilon$ for some $\epsilon>0$ (which depends on the processes included); $\Psi(u) = (1-\epsilon) u + \epsilon \sum_{k>1} p_k u^k + p_{-k} (1 - (1-u)^k).$ 
 \end{enumerate}
 \end{cor}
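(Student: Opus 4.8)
The plan is to apply \thref{thm:eqd} to each of the three families, so the entire job is to check, for each $\Psi$, that it is twice continuously differentiable and that \eqref{eq:inequality} holds (note (C$^2$) implies (D), so the results of \cite{elliot} giving \eqref{eq:integro} and \eqref{eq:diff} apply). Twice differentiability is immediate: $\Psi$ is a polynomial in (1) and (2), while in (3) the hypothesis $\sum_{k\ge2}k(k-1)[p_k+p_{-k}]\le\tfrac12$ forces the defining series and its first two term-by-term derivatives to converge uniformly on $[0,1]$. Abbreviate $F=F^\Psi$ and set $\rho(z)=zF'(z)/F(z)$, so that $zF'(z)=\rho(z)F(z)$ and $z\psi'(F(z))F'(z)=\rho(z)F(z)\psi'(F(z))$; then \eqref{eq:inequality} is equivalent to
\[
(\delta-1)\,\psi(F(z))\ \le\ \rho(z)\,F(z)\,\psi'(F(z))\ \le\ (3-\delta)\,\psi(F(z))\qquad(z\ge0).
\]
Because $\rho,F,\psi\ge0$ and $\delta\le1$, the left inequality is automatic wherever $\psi'(F(z))\ge0$ and the right one wherever $\psi'(F(z))<0$; so one only has to bound $\rho(z)F(z)\psi'(F(z))/\psi(F(z))$ above where $\psi$ is increasing and below where $\psi$ is decreasing.

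The a priori input that drives everything is the estimate $\rho\le2$ on $[0,\infty)$. Rewriting \eqref{eq:diff} as $zF''=F'(1-z\psi(F))$ and differentiating $\rho=zF'/F$ yields the Riccati equation
\[
\rho'(z)=\frac{\rho(z)\bigl(2-\rho(z)\bigr)}{z}-\psi(F(z))\,\rho(z),
\]
and \eqref{eq:integro} gives $F'(z)\sim C_\Psi z$ (hence $F(z)\sim\tfrac12C_\Psi z^2$ and $\rho(0^+)=2$) as $z\downarrow0$, with $C_\Psi=\int_0^\infty y^{-1}\,d\Psi(F(y))\in(0,\infty)$. Since $\psi(F(z))>0$ for $z>0$, the Riccati equation forces $\rho'(z)=-2\psi(F(z))<0$ whenever $\rho(z)=2$; a barrier argument — on each component of the open set $\{\rho>2\}$ the function $\rho$ would be strictly decreasing from the value $2$ attained (or approached) at the left endpoint — then gives $\rho\le2$. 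This already proves (1): for $\Psi(u)=u^2$ one has $\psi'\equiv2>0$, so only the upper inequality is in force and it reads $2\rho(z)F(z)\le2(3-\delta)F(z)$, i.e.\ $\rho(z)\le3-\delta$, which holds with $\delta=1$.

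For (2) and (3) one additionally needs an upper bound on $m(z):=zF'(z)$. From $zF''=F'(1-z\psi(F))$ one gets $m'(z)=F'(z)(2-z\psi(F(z)))$, so $m$ increases while $z\psi(F(z))<2$ and decreases afterward, attaining its maximum at the unique $z^\ast\le2/\psi_{\min}$ with $z^\ast\psi(F(z^\ast))=2$; integrating $F''/F'=z^{-1}-\psi(F)$ gives moreover $F'(z)=C_\Psi z\exp(-\int_0^z\psi(F(t))\,dt)$, whence $C_\Psi\le\psi_{\max}^2$. Feeding in the coefficient bounds $|\psi'|\le\sum_{k\ge2}k(k-1)[p_k+p_{-k}]$ and $\psi\ge p_1$ (for (3)) reduces the two inequalities of the first paragraph to a bound of the shape $\sup_z m(z)<\theta$. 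In case (2), $\psi=1.2-0.4u$ has $\psi'<0$ everywhere, $\psi_{\min}=0.8$, $\psi_{\max}=1.2$, and $\sup_z m(z)\le C_\Psi\sup_z z^2e^{-0.8z}\le1.22$; since then $|\psi'(F(z))|m(z)/\psi(F(z))\le(0.4)(1.22)/0.8<1$, \eqref{eq:inequality} holds with, say, $\delta=0.3$. In case (3) the increasing-$\psi$ range is handled with room to spare (the quantity to bound is $\rho\cdot F\psi'(F)/\psi(F)\le2\cdot\tfrac23=\tfrac43$), but the decreasing-$\psi$ range requires $|\psi'(F(z))|m(z)\le(1-\delta)\psi(F(z))$, which with $|\psi'|\le\tfrac12$ and $\psi\ge p_1\ge\tfrac34$ amounts to $\sup_z m(z)<\tfrac32$ at the borderline $\sum k(k-1)[p_k+p_{-k}]=\tfrac12$ (for instance $p_1=\tfrac34$, $p_{-2}=\tfrac14$); there the crude bound yields only $\sup_z m(z)\le C_\Psi\sup_z z^2e^{-3z/4}\le\tfrac{25}{16}\cdot\tfrac{64}{9}e^{-2}=\tfrac{100}{9}e^{-2}\approx1.50$, i.e.\ the ratio comes out just over $1$.

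Closing this last gap — proving $\sup_z zF'(z)<\tfrac32$ for every $\mathbf p$ meeting the hypothesis — is the main obstacle. The route is to bootstrap: inserting the trivial bound $F(t)\le t\wedge1$ into $\Phi(z):=\int_0^z\psi(F(t))\,dt$ sharpens the lower bound on $\Phi$, where one uses that for these $\Psi$ the density $\psi$ is largest exactly where $F$ is smallest (the min-$k$ and uniform parts inflate $\psi(0)$), which in turn sharpens $F'(z)=C_\Psi ze^{-\Phi(z)}$, hence $m(z)$, at the moderate location $z^\ast$ where $F(z^\ast)$ is bounded away from $1$. Iterating this refinement once or twice, with the uniform profile $F^{\mathrm{Uni}}(z)=1-(1+z)e^{-z}$ (for which $zF'(z)=z^2e^{-z}$ peaks at $4e^{-2}\approx0.54$) as a benchmark, pushes the constant below $\tfrac32$ and delivers \eqref{eq:inequality}. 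The explicit interpolations (3a) and (3b) need no bootstrap: there $\sum_{k\ge2}k(k-1)[p_k+p_{-k}]<\tfrac1{100}$, and already $m(z)\le2F(z)<2$ from $\rho\le2$ makes $|\psi'(F(z))|m(z)/\psi(F(z))<2\cdot\tfrac1{100}/0.99<1$.
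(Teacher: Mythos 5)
Your overall strategy---verify \eqref{eq:inequality} for each $\Psi$ and invoke \thref{thm:eqd}---is the paper's, and parts (1) and (2) essentially go through. Your treatment of the $\max$-$2$ process is a genuinely different and cleaner route than the paper's \thref{lem:special}: writing $\rho=zF'/F$, the Riccati identity $\rho'=\rho(2-\rho)/z-\psi(F)\rho$ obtained from \eqref{eq:diff}, together with $\rho(0^+)=2$ from \eqref{eq:integro}, gives $zF'(z)\le 2F(z)$ by a barrier argument, and for $\Psi(u)=u^2$ this is exactly \eqref{eq:inequality} with $\delta=1$. (It also supplies, for free, the hypothesis $\sup_z z\hat F'(z)<\infty$ needed for \thref{master} \ref{continuous}, which the paper obtains from \thref{cor:extra}; you should state explicitly that you are verifying it, since the proof of \thref{thm:eqd} relies on it.) The paper instead proves \thref{lem:special} by splitting on the sign of $\psi(F)-z\psi'(F)F'$ and differentiating; your bound $\rho\le2$ is stronger and more reusable, e.g.\ it disposes of the sub-examples (3a)--(3b) immediately.

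The genuine gap is in part (3), and you have named it yourself. Two things go wrong. First, the bound $C_\Psi\le(\sup\psi)^2$ on $C_\Psi=\lim_{\epsilon\to0}F'(\epsilon)/\epsilon$ is asserted with no argument (it does not follow from integrating $F''/F'$), and it is also used in part (2). Second, even granting it, your own arithmetic at the borderline $C_{\mathbf p}=\tfrac12$, $p_1=\tfrac34$ gives $\sup_z zF'(z)\le\tfrac{100}{9}e^{-2}\approx1.50$, which fails to beat the required threshold $\tfrac32(1-\delta)$; the ``bootstrap'' offered to close this is only a plan, not a proof, and its guiding heuristic ($\psi$ largest where $F$ is smallest) is false when the $\max$-$k$ components dominate the nonlinear part. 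The repair is available with tools already in the paper: integrating by parts in \eqref{eq:integro} as in \thref{lem:F'1} gives $C_\Psi=\xnorm{\Psi\circ F}\le(\sup\psi)\,\xnorm{F}=\sup\psi\le p_1+C_{\mathbf p}$ (using \thref{master} \ref{rate} and $\Psi(u)\le u\sup\psi$), which at the borderline replaces $\tfrac{25}{16}$ by $\tfrac54$ and yields $\sup_z zF'(z)\le\tfrac{80}{9}e^{-2}\approx1.20<\tfrac32$, closing part (3) with no bootstrap. For comparison, the paper's \thref{lem:zF'2} sidesteps $C_\Psi$ entirely by anchoring the integrated ODE at $\epsilon=1$, where $F'(1)\le1$ is supplied by \thref{lem:F'2}. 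As written, however, your argument for part (3) in full generality is incomplete.
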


%\subsubsection*{Discussion and further questions}

The reason our approach works for only certain $\Psi$ is unclear. Numerical methods indicate the inequality fails for other processes, suggesting a different approach is needed. This is surprising since processes which ought to better equidistribute points, like a $\max$-3 process, do not meet our criterium. Nonetheless, we conjecture that all $\max$-$k$ and $\min$-$k$ processes are equidistributed. 
%Quite possibly all $\Psi$-processes are equidistributed. 
The properties established in \thref{master} are an important step in exploring this for $\max$, $\min$ and more general $\Psi$-processes.
% Hopefully, the properties we establish for general mixtures in \thref{master2}
 %and also the fact that convergence of $A^\alpha$ in $\xnorm{\cdot}$ is suff 
 %will aide further progress. 
% This is consistent with the prediction made in \cite{elliot} that any $\max$-$k$ or $\min$-$k$ process is equidistributed. 
The rate of convergence to a uniform placement of points and also the asymptotic size of the largest interval are other important open problems. More thorough discussion can be found in \cite{elliot}.

\subsubsection*{Overview}

This article is organized to quickly arrive at the proof of \thref{thm:eqd}. In Section \ref{sec:prelims} we describe the evolution of intervals in $[0,\alpha]$ and give the major definitions. In Section \ref{sec:thm1} we state without proof \thref{prop:cond1} and \thref{master}. The first proposition describes the importance of \eqref{eq:inequality} holding. The second shows that $A_t^\alpha$ has similar properties as those needed of $A_t$ to deduce convergence in \cite{elliot}. We then use this to establish \thref{thm:eqd}. Section \ref{sec:proofs} contains the proofs for the previous section. 
Finally, in Section \ref{sec:inequalities} we prove \thref{cor:main} by showing that various interpolations satisfy \eqref{eq:inequality}.

\section{Subintervals in $[0,\alpha]$} \label{sec:prelims}

%Though our result is for the max-$2$ process, i
%Though \thref{thm:main} only concerns $\max$-2, uniform and $\min$-2 process, 
  
We start with a formal definition for a process to be equidistributed. Suppose $n_0$ points are initially placed. After $n$ iterations of an interval splitting process let $N_n^\alpha$ be the number of the first $n_0 + n$ points smaller than $\alpha$. We say a sequence is \emph{equidistributed} if $n\inv N_n^\alpha \to \alpha$ for all $\alpha \in [0,1]$.
It is convenient to work in continuous time. Following \cite{elliot} we have points arrive as a Poisson process with intensity $e^t$. Formal details are in \thref{prop:cond1}. So, in continuous time equidistribution is equivalent to $e^{-t} N_t^\alpha \to \alpha$ for all $\alpha \in [0,1]$.

\subsection{Describing $\tilde {\mathbf A}^\alpha_t$}
 Fix $\alpha \in [0,1]$.  We use the convention that a bold face letter represents a process indexed by time (i.e.\ $\tilde {\mathbf A} = (\tilde A_t)_{t \geq 0}$). 
 %For $k \geq 2$ we consider the $\max$-$k$ process, for $k=1$ the uniform process and for $k \leq -2$ the $\min$-$k$ process. 
 Define the joint processes $(\tilde {\mathbf{A}}^\alpha, \tilde{\mathbf A}^{\alpha_+}, \tilde {\mathbf A})$ to be the size-biased empirical distributions of interval lengths contained in $[0,\alpha]$, $[\alpha,1]$ and $[0,1]$, respectively. Formally, letting $I_{1}^{\alpha, (t)},  \hdots, I_{N_t^\alpha}^{\alpha, (t)}$ %$(I_{j}^{\alpha, (t)})_{j=1}^{N_t^\alpha}$ is the collection of
be the lengths of subintervals contained in $[0,\alpha]$ at time $t$ we define
$$\tilde A_t^\alpha(x) = \sum_{j=1}^{N_t^\alpha} I_j^{\alpha, (t)} \cdot \ind{I_j^{\alpha, (t)} \leq x },$$ and similarly for $\tilde A^{\alpha_+}_t$ and $\tilde A_t$. The spark for the refined analysis comes from the relation
\begin{align}
\tilde A_t^{\alpha}(x) + \tilde A_t^{\alpha_+}(x) = \tilde A_t(x), \qquad \forall t,x \geq 0 \label{eq:key}.
\end{align}

To ensure that no intervals are double counted assume the initial set of points placed in $[0,1]$ always contains $\{\alpha\}$. This assumption is only for convenience. Our proof could be adapted to omit it by running the process until two points $\alpha_1 \leq \alpha  \leq \alpha_2$ land sufficiently close to $\alpha$, and then using the bound $N_t^{\alpha_1} \leq N_t^{\alpha} \leq N_t^{\alpha_2}$. We further remark that the same reasoning extends our theorems to the unit circle.

In \cite[Section 2]{elliot} the authors prove that
  $$\tilde A_t(x) = \tilde A_0(x) + \int_0^t e^sx^2 \int_x^\infty \f{\psi(\tilde A_s(z))}{z} d \tilde A_s(z) + \tilde M_t$$
   for some martingale $\tilde M_t$. The following proposition shows that $\tilde A_t^\alpha$ satisfies a similar equation. 
  
  \begin{prop} \thlabel{prop:formula}
Let $\psi = \Psi'$. For any $\Psi$-process satisfying \emph{(C$^1$)}, the joint processes $(\tilde {\mathbf A}^\ep, \tilde {\mathbf A}^{\alpha_+}, \tilde {\mathbf A})$ satisfy the equation
$$\tilde A_t^\ep(x) = \tilde A_0^\ep(x) + \int_0^t e^s x^2 \int_x^\infty \f{ \psi(\tilde A_s(z))}{z} d \tilde A^\ep_s(z) ds + \tilde M^\ep_t(x),$$
with $\tilde M_t^\ep$ a martingale.
\end{prop}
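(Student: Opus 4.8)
The plan is to mimic the derivation of the evolution equation for $\tilde{\mathbf A}$ in \cite[Section 2]{elliot}, but to track the infinitesimal effect of a point arrival separately on the subintervals inside $[0,\alpha]$. Since points arrive as a Poisson process of intensity $e^t$, I would write the compensated jump decomposition
\[
\tilde A_t^\alpha(x) = \tilde A_0^\alpha(x) + \int_0^t e^s \,\E\big[ \Delta_s^\alpha(x) \,\big|\, \mathcal F_{s^-}\big]\, ds + \tilde M_t^\alpha(x),
\]
where $\Delta_s^\alpha(x)$ is the change in $\tilde A_\cdot^\alpha(x)$ caused by a single arrival at time $s$, and $\tilde M_t^\alpha(x)$ is the associated compensated (hence martingale) term. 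The key point, and the reason \eqref{eq:key} is ``the spark,'' is that the interval-selection rule for a $\Psi$-process is driven by $\tilde A_{s^-}$ on all of $[0,1]$ — an arriving point lands in the interval of length $\ell_s = \tilde A_{s^-}^{-1}(u)$ — so conditioning on whether the selected interval lies in $[0,\alpha]$ or in $[\alpha,1]$ just restricts the measure $d\tilde A_s$ to $d\tilde A_s^\alpha$ while the weight $\psi(\tilde A_s(z))/z$ (which encodes the global selection probability) is unchanged. This is exactly why the integrand in the proposition has $\psi(\tilde A_s(z))$ and not $\psi(\tilde A_s^\alpha(z))$.

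Concretely, I would compute $\E[\Delta_s^\alpha(x)\mid \mathcal F_{s^-}]$ by the same bookkeeping as in \cite{elliot}. When a point subdivides an interval of length $\ell$ contained in $[0,\alpha]$ at a uniform location, the size-biased count $\tilde A_\cdot^\alpha(x)$ loses the term $\ell\,\ind{\ell\le x}$ and gains $U\ell\,\ind{U\ell \le x} + (1-U)\ell\,\ind{(1-U)\ell\le x}$ for $U\sim\Uni(0,1)$; taking expectations over $U$ and over the selected interval (whose length has size-biased-inverse law governed by $\Psi$, restricted to the event that the interval sits in $[0,\alpha]$) reproduces, after the change of variables used in \cite[Section 2]{elliot}, the expression $e^s x^2\int_x^\infty \frac{\psi(\tilde A_s(z))}{z}\,d\tilde A_s^\alpha(z)$. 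An interval contained in $[\alpha,1]$ contributes nothing to $\tilde A_\cdot^\alpha$, and by the standing assumption that $\{\alpha\}$ is among the initial points, no interval straddles $\alpha$, so these are the only two cases. That (C$^1$) is assumed guarantees $\psi=\Psi'$ exists and is continuous, which is what makes the integral against $d\tilde A_s$ well defined and the martingale term integrable.

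Finally I would check that $\tilde M_t^\alpha$ is genuinely a martingale: it is the difference of the pure-jump process $\tilde A_t^\alpha(x)$ and its predictable compensator, and the jumps are uniformly bounded (each jump changes $\tilde A_\cdot^\alpha(x)$ by at most $\ell \le 1$ in absolute value) with integrable jump rate $e^s$ on compact time intervals, so the standard compensation argument from \cite{elliot} applies verbatim. The main obstacle is not any single estimate but getting the conditional-expectation computation exactly right — in particular verifying that restricting to intervals inside $[0,\alpha]$ leaves the selection weight $\psi(\tilde A_s(\cdot))$ untouched and only swaps $d\tilde A_s$ for $d\tilde A_s^\alpha$; once that is in place the result follows by the same manipulations as the $[0,1]$ case in \cite{elliot}.
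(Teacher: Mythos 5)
Your proposal is correct and follows essentially the same route as the paper: a compensated Poisson point process decomposition in which the jump of $\tilde A^\alpha_\cdot(x)$ is written via the cut function $v\,\ind{\ell v\le x}+(1-v)\ind{\ell(1-v)\le x}$ and an indicator restricting to intervals inside $[0,\alpha]$, with the key identity (requiring (C$^1$)) that $\ind{\text{interval}\subset[0,\alpha]}\,d\Psi(\tilde A_{s^-}(z))=\psi(\tilde A_{s^-}(z))\,d\tilde A^\alpha_{s^-}(z)$ — exactly the ``swap $d\tilde A_s$ for $d\tilde A^\alpha_s$ while keeping the weight $\psi(\tilde A_s)$'' step you single out. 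The remaining computations (integrating out $v$ to get $(x/\ell)^2$ and the change of variables $u\mapsto z$) match the paper's verbatim.
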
 

\begin{proof}
We first build up some necessary definitions. Let $\Psi$ be a continuously differentiable distribution function. 
 Define a Poisson random measure $\prod$ on $[0,\infty) \times [0,1]^2$
with intensity $e^t dt \otimes d \Psi(u) \otimes  dv .$
%Denote points in $\Pi$ by $\pi_t = (t,u,v)$.
Set $\l_t(u) = \tilde A_{t^-}\inv(u)$. We use the function $h(v,\l,x) =  v \ind { \l v \leq x } + (1-v) \ind{\l(1-v) \leq x } )$ to ``cut" our sampled interval by $v$. 

We need to detect whether the sampled interval belongs to $[0,\alpha]$. We use the function $g_t^\alpha( \l_t(u)) = \ind{ \l_t(u) \subset  [0,\alpha]}.$ The function $g_t^\alpha$ can be constructed rigorously by assuming all of the subintervals have different lengths, and putting a point mass on each length of subintervals in $[0,\alpha]$. This is a harmless simplification; even for starting configurations with same-length subintervals we know that (when $\Psi \in C^1)$ after an a.s.\ finite time a point will be added to each interval. Once this happens all of the subintervals are of different lengths  a.s.\ and will continue to be of different lengths  a.s.

 We combine all of this to define 
\begin{align*}
\tilde B^\ep(s,u,v,x) &= \l_s(u)\ind{ \l_s(u) > x } g_t^\alpha( \l_s(u)) h(v, \l_s(y)),
\end{align*}
so that
$\tilde A^\alpha_t(x) = \tilde A^\alpha_0(x) + \tsum_{(s,u,v,x)  \in \Pi, s \leq t} \tilde B^\alpha(s,u,v,x).$

Looking to obtain the semimartingale decomposition of $\tilde A^\alpha_t(x)$ we integrate $B(t,u,v,x).$ Note that $\int_0^1 h(v, \l ,x) dv = (x/\l)^2$. We then write

\begin{align*}
	\int \int \tilde B^\alpha(t,u,v,x) dv d \Psi(u)  &= \int_0^1 \l_t(u) \ind{ \l_t(u) > x} g_t^\alpha(\l_s(u)) ( x/ \l_t(u))^2 d \Psi(u) \\
				&= x^2 \int_0^1 \f{ 1 }  { \l_t(u) } \ind{ \l_t(u) > x } g_t^\alpha(\l_t(u))d \Psi(u) \\
				&=x^2 \int_x^\infty \f{ 1 }  { z }  g_t^\alpha(z)d \Psi( \tilde A_{t^-}(z)).
\end{align*}
The last line follows from the fact that for a bounded Borel function, $f$,  $$\int_0^1 f( \l_t(u) ) d \Psi(u) = \int_0^\infty f(z) d \Psi( \tilde A_{t^-}(z) ).$$

Recall that $\Psi$ is assumed to be $C^1$, and that the indicator function $g_t^\alpha$ is zero unless the selected interval belongs to $[0,\alpha]$. This lets us write 
$$g_t^\alpha(z) d \Psi(\tilde A_{t^-}(z)) = \psi( \tilde A_{t^-}(z) ) d \tilde A^\alpha_{t^-}(z).$$
We now rewrite the integral of $\tilde B^\alpha_t$ as
\begin{align*}
		\int \int \tilde B^\alpha(t,u,v,x) dv d \Psi(u) & = x^2 \int_x^\infty \f{ \psi( \tilde A_{t^-}(z) ) }  { z }d\tilde A^\alpha_ {t^-}(z). 
\end{align*}
Integrate this from $0$ to $t$ and we arrive at the claimed decomposition of $\tilde A^\alpha_t(x)$.
\end{proof}

%\begin{remark}
%A very similar calculation shows that $\tilde{\mathbf A}^\ep$ in the min-$k$ process (where $\Psi_k(u) = 1 - (1-u)^k$)  satisfies the same equation.
%\end{remark}

\subsection{Definitions and notation}
What follows are the essential facts and notation for understanding the proof of \thref{thm:eqd}. Let non-tilde processes represent the original process scaled by $e^{-t}$ (i.e.\ $A_t(x) = \tilde A_t(e^{-t} x) )$. In light of \thref{prop:formula}, a change of variables gives the relationship 
\begin{align}\mathbf A^\ep = \CC(\mathbf A^\ep, \mathbf A) + \mathbf M^\ep, \label{eqn:relationship}\end{align}
% it is natural to  introduce the operator $\CC \colon \mathcal X \times \mathcal X \to C( [0,\infty), L^1_{\text{loc}})$ defined by
where $\CC \colon \mathcal X \times \mathcal X \to C( [0,\infty), L^1_{\text{loc}})$  is defined by 
$$\CC(\mathbf F,\mathbf G)_t(x) = F_0(e^{-t} x) + \int_0^t (e^{s-t} x)^2 \int_{ e^{s-t} x} ^\infty \f{ \psi( G_s(z) ) }{z} d F_s(z) ds.$$
Here $\mathcal X= \mathcal B( [0,\infty), \mathcal D)$ 
where $\mathcal D =\{ F \colon [0,\infty) \to [0,1], \text{c\'adl\'ag, increasing}\}$. The set $\mathcal X$ is a subspace of the space $\mathcal B([0,\infty), L_{\text{loc}}^1)$ of measurable maps from $[0,\infty)$ to $L_{\text{loc}}^1$ with the topology of locally uniform convergence, which we denote by the symbol $\overset{\mathcal X} \to$. 

We say that a family of functions $( \mathbf F^{(n)})_{n \in \mathbb N}$ in $\mathcal X$ is \emph{asymptotically equicontinuous} if for every compact $K \subset [0,\infty)$,
$$\lim_{\delta \to 0} \lim_{n \to \infty }  \sup_{\substack{s,t \geq 0 \\ |s-t| \leq \delta} }  \int_K  | F_s^{(n)}(x) - F_s^{(n)}(x) | dx  = 0.$$
A family of distributions $(F_t)_{t \geq 0}$ is \emph{tight} if for all $\epsilon>0$ there exists $N$ such that $F_t(N) \geq 1- \epsilon$ for all $t \geq 0$.

We will use $\hat F$ and $F^{\Psi}$ interchangeably to denote the a.s.\ pointwise limiting distribution of $A_t$ from \cite[Theorem 1.1]{elliot}. Also define the stationary distribution $ \mathbf{\hat F}^*$ so that $\hat F^*_t = \hat F$ for all $t \geq 0$. With the convergence $A_t \to  \hat F$ in mind, we consider the operator
$$\CC^* (\mathbf F)_t = \CC(\mathbf F, \hat{\mathbf F}^*)_t = F_0(e^{-t} x) + \int_0^t (e^{s-t} x)^2 \int_{ e^{s-t} x} ^\infty \f{ \psi(\hat F(z))}{z} d F_s(z) ds.$$
%Analogously to the set $\mathfrak F^\Psi$ from \cite{elliot} 
We will see in the proof of \thref{thm:eqd} that the limiting distribution of $A^\alpha_t$ belongs to the set of fixed points
$$\mathfrak F^\alpha = \{ \mathbf F \in \mathcal X_1 \colon \mathbf F = \CC^*(\mathbf F), F_t(+\infty) = \alpha \text{ and } ( \tf 1 \ep F_t)_{t \geq 0} \text{ tight} \}.$$
Here $\mathcal X_1 = \mathcal B( [0,\infty), \{F \in \mathcal D\colon \xnorm{F} \leq 1\})$, where $\xnorm{\cdot}$ is the case $\delta =1$ of  
%and $\xnorm{f} = \int_0^\infty x^{-2} |f(x)| dx$. 
the following family of norms on $L^1_{\text{loc}}([0,\infty))$:
\begin{align}
\quad\dnorm{f} = \int_0^\infty x^{-1 - \delta} |f(x)| dx, \quad \delta \in (0,1]. \label{eq:norm}
 \end{align} 
 The norm used exclusively in \cite{elliot} is $\xnorm{f} = \int_0^\infty x^{-2} |f(x)| dx $. This extra $\delta$ of freedom lets us prove the interpolation between $\min$-2 and $\max$-2 is equidistributed. The effect of working in this norm is the appearance of the $(2-\delta)$ term in \eqref{eq:inequality}.
 % In the proof of \thref{thm:eqd} we will need convergence of $\dnorm{A_t^\alpha(x) - \alpha F^\Psi}$. This convergence is unaffected by the values of these constants. 
 
We remark that $\xnorm{\cdot}$ does have special significance. A key property (see \thref{master} \ref{rate}) is  that $\xnorm{ \tilde A_t^\alpha } = e^{-t} N_t^\alpha.$ Thus, we can recover the number of points added to the interval $[0,\alpha]$, which is the fundamental quantity for proving equidistribution.  
 
%	Our main theorem is a sufficient condition for equidistribution in terms of $\Psi$ and $F^\Psi$.
 
%Lastly, define an alternate norm $\dnorm{f} = \int_0^\infty x^{-3/2} |f(x)| dx$. 
%This norm helps us prove mixtures biased towards $\min$-2 is equidistributed. 

\section{Proof of \thref{thm:eqd}} \label{sec:thm1}

%We wish to prove that $\mathbf A^\ep$ and $\mathbf A$ are, up to scaling, indistinguishable in the limit. 
We delay the proofs of the following two propositions until the next section. Our goal is to make transparent the necessary ingredients for proving \thref{thm:eqd}. 
The first proposition describes the benefit of when a $\Psi$-process satisfies \eqref{eq:inequality}. 
\begin{prop} \thlabel{prop:cond1}
If $\Psi$ satisfies $\emph(\text{\emph{C}}^1)$ and there exists $\delta \in (0,1]$ such that \eqref{eq:inequality} holds for all $z \geq 0$, then $$\dnorm{F_t - \alpha\hat F} \leq 2(1+ \delta \inv) e^{-\delta t }$$ for all $\mathbf F \in \mathfrak F^{\alpha}$.
\end{prop}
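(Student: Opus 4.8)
The plan is to prove the pointwise-in-time differential inequality $\tfrac{d}{dt}\dnorm{F_t-\alpha\hat F}\le-\delta\,\dnorm{F_t-\alpha\hat F}$ and integrate. Write $\mathbf H=\mathbf F-\alpha\hat{\mathbf F}^*$. Since $\hat{\mathbf F}^*$ is a fixed point of $\CC^*$ (equivalently, \eqref{eq:integro} holds) and $\CC^*$ is linear in its argument, $\alpha\hat{\mathbf F}^*\in\mathfrak F^\alpha$, so $\mathbf H=\CC^*(\mathbf H)$; moreover $H_t(+\infty)=0$ and $H_t(z)\to0$ as $z\to\infty$ for every $t$, while $H_t(x)=o(x)$ as $x\to0^+$ (because $\xnorm{F_t}\le1$ forces $F_t(x)=o(x)$, and $\hat F(x)=O(x^2)$ near $0$ by \cite{elliot}). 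Also, since $\mathbf F,\alpha\hat{\mathbf F}^*\in\mathcal X_1$, splitting $\int_0^\infty x^{-1-\delta}$ at $x=1$, using $x^{-1-\delta}\le x^{-2}$ on $(0,1)$ and $0\le F_t\le1$ on $(1,\infty)$, gives $\dnorm{F_t}\le1+\delta^{-1}$ and hence $\dnorm{H_t}\le2(1+\delta^{-1})$ for all $t$; so the differential inequality plus Gronwall's lemma give exactly $\dnorm{H_t}\le e^{-\delta t}\dnorm{H_0}\le2(1+\delta^{-1})e^{-\delta t}$, which is the claim.

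For the differential inequality I would differentiate $\mathbf H=\CC^*(\mathbf H)$ in $t$; using the semigroup property of $\CC^*$ this yields the transport identity $\partial_tH_t(x)=-x\,\partial_xH_t(x)+\Theta_{H_t}(x)$, where $\Theta_g(x):=x^2\int_x^\infty\tfrac{\psi(\hat F(z))}{z}\,dg(z)$. Differentiating the norm through the absolute value (legitimate since $H_t$ is regular in $x$),
\[
\tfrac{d}{dt}\dnorm{H_t}=\int_0^\infty x^{-1-\delta}\,\sgn(H_t(x))\bigl(-x\,\partial_xH_t(x)+\Theta_{H_t}(x)\bigr)\,dx .
\]
The transport part equals $-\int_0^\infty x^{-\delta}\,d|H_t(x)|$, which by integration by parts — the boundary terms at $0$ and $\infty$ vanishing thanks to the decay recorded above — equals $-\delta\int_0^\infty x^{-1-\delta}|H_t(x)|\,dx=-\delta\dnorm{H_t}$.

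It remains to check the $\Theta$ part is $\le0$; this is where \eqref{eq:diff} and \eqref{eq:inequality} enter. Integrating by parts inside $\Theta$, using \eqref{eq:diff} in the form $\tfrac{d}{dz}\tfrac{\psi(\hat F(z))}{z}=\tfrac{q(z)}{z^2}$ with $q(z):=z\psi'(\hat F(z))\hat F'(z)-\psi(\hat F(z))$, and $H_t(+\infty)=0$, one gets $\Theta_{H_t}(x)=-x\psi(\hat F(x))H_t(x)-x^2\int_x^\infty\tfrac{H_t(z)q(z)}{z^2}\,dz$. Hence
\[
\int_0^\infty x^{-1-\delta}\sgn(H_t(x))\Theta_{H_t}(x)\,dx=-A(t)-\int_0^\infty x^{1-\delta}\sgn(H_t(x))\!\int_x^\infty\frac{H_t(z)q(z)}{z^2}\,dz\,dx,
\]
where $A(t):=\int_0^\infty x^{-\delta}\psi(\hat F(x))|H_t(x)|\,dx\ge0$. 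By Fubini and $\int_0^z x^{1-\delta}\,dx=\tfrac{z^{2-\delta}}{2-\delta}$, the modulus of the last integral is at most $\tfrac{1}{2-\delta}\int_0^\infty|H_t(z)||q(z)|z^{-\delta}\,dz$, which by $|q|\le(2-\delta)\psi(\hat F)$ (i.e.\ \eqref{eq:inequality}) is $\le A(t)$. So the $\Theta$ part is $\le-A(t)+A(t)=0$, and combining with the transport part proves $\tfrac{d}{dt}\dnorm{H_t}\le-\delta\dnorm{H_t}$.

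The key idea — and the step I had to think about — is this final cancellation: the constant $2-\delta$ in \eqref{eq:inequality} is tuned precisely to the Fubini weight $z^{2-\delta}/(2-\delta)$, so that the off-diagonal part of $\Theta$ is beaten by the negative diagonal part $-A(t)$, while the decay rate $\delta$ simply falls out of $\partial_x(x^{-\delta})=-\delta x^{-1-\delta}$. Once this is in hand, the remaining obstacle is purely analytic: justifying differentiation of $\dnorm{H_t}$ through $|\cdot|$ and in $t$ under the integral, the validity of the transport identity, and the vanishing of every boundary term at $0$ and $\infty$ — all of which rest on the regularity and decay properties of elements of $\mathfrak F^\alpha$, which I would expect to be supplied by \thref{master}. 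A clean route is to establish the bound first for smooth truncations of $\mathbf H$ and pass to the limit.
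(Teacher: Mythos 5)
Your proposal is correct and is essentially the paper's proof: the same integration by parts producing $q(z)=z\psi'(\hat F(z))\hat F'(z)-\psi(\hat F(z))$, the same Fubini step yielding the weight $z^{2-\delta}/(2-\delta)$ that \eqref{eq:inequality} is tuned to cancel, and the same splitting of $\dnorm{\cdot}$ at $x=1$ for the constant $2(1+\delta^{-1})$. The only difference is cosmetic: the paper changes variables to $\tilde F_t(x)=F_t(e^tx)$ so that the norm is monotone and the factor $e^{-\delta t}$ appears as a Jacobian, whereas you keep the transport term $-x\partial_xH_t$ and extract $-\delta\dnorm{H_t}$ from it directly before applying Gronwall.
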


\noindent We will also need several general properties of $\mathbf A^\alpha$.

\begin{prop}\thlabel{master} The following hold for any $\Psi$ satisfying $(\text{\emph{C}}^2)$:
\begin{enumerate}[label = {(\Roman*)}, labelindent = .2 cm] 
    %\item For all $\mathbf F \in \mathfrak F^\ep$ it holds that $\xnorm{F_t - \alpha \hat F} \leq 2e^{-t}.$\label{contraction} %(Lemma 3.5 and Theorem 7.1)
    %\item  If $\xnorm{f_n-g} \to 0$ then $d_{L^1_{\text{loc}}}(f_n,g) \to 0. \label{dominate}$
    \item $\xnorm{A^\al_t} = e^{-t} N^\al_t$ and $\xnorm{\alpha \hat F} = \alpha.$ \label{rate}
    \item The collection of distribution functions $(\f 1 \ep A^\ep_t)_{t \geq 0}$ is tight.  \label{tight} %(Proposition 6.3)
    \item The family $( \mathbf A^{\alpha, (n)})$ defined by $A_t^{\alpha, (n)} = A^\alpha_{t+n}$ is asymptotically equicontinuous.  \label{equi} %(Lemma 7.5)
    \item $\mathbf M^{\ep, (n)} \overset{\mathcal X}\to 0$ as $n \to \infty$, where $M_t^{\ep,(n)}(x) = M^\ep_{t+n}(x) - M^\ep_n(e^{-t}x)$ for every $t \geq 0.$ \label{noise} %(Lemma 7.9 and Lemma 7.10) 
    \item Suppose additionally that $\sup_{ z \geq 0 }z \hat F'(z) < \infty$ (discussion of this hypothesis appears in \thref{lem:extra}). Define $\mathbf A^{(n)}$ by $A^{(n)}_t = A_{t+n}$. If $\mathbf F^{(n)}\xto \mathbf F$ then $\mathscr C(\mathbf F^{(n)}, \mathbf A^{(n)}) \overset{\mathcal X} \to \mathscr C^*( \mathbf F)$. \label{continuous} 

\end{enumerate}

\end{prop}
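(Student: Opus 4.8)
The plan is to prove the five items of \thref{master} largely by transcribing and adapting the arguments of \cite{elliot} for the full process $\mathbf A$ to the restricted process $\mathbf A^\alpha$, leaning on the additivity relation \eqref{eq:key} and the semimartingale decomposition from \thref{prop:formula} (in the scaled form \eqref{eqn:relationship}) whenever a new ingredient is needed. I would take the items roughly in the order (I), (II), (IV), (III), (V), since the later ones reuse estimates from the earlier ones.

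For \ref{rate}, I would compute directly: by definition $\xnorm{\tilde A_t^\alpha} = \sum_j I_j^{\alpha,(t)} \int_{I_j^{\alpha,(t)}}^\infty x^{-2}\,dx = \sum_j 1 = N_t^\alpha$, and then the $e^{-t}$-scaling in $A_t^\alpha(x) = \tilde A_t^\alpha(e^{-t}x)$ contributes exactly the factor $e^{-t}$ after the change of variables; the identity $\xnorm{\alpha\hat F} = \alpha$ follows from $\xnorm{\hat F}=1$ (which is \cite{elliot}) by linearity. For \ref{tight}, the cleanest route is via \eqref{eq:key}: since $\tilde A_t^\alpha \le \tilde A_t$ pointwise (each is a sum over a subset of the intervals), tightness of $(\tfrac1\alpha A_t^\alpha)$ would follow if one can show the mass $A_t^\alpha(+\infty) = \alpha$ is asymptotically recovered, i.e. that the tail mass $\alpha - A_t^\alpha(N)$ is uniformly small; here one combines the known tightness of $A_t$ from \cite{elliot} with a lower-bound argument showing that $[0,\alpha]$ does not asymptotically shed a positive fraction of its mass into large intervals — this is where some work is needed, presumably an argument that large intervals inside $[0,\alpha]$ get subdivided at a rate comparable to large intervals overall. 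For \ref{noise}, the martingale $\tilde M_t^\alpha$ from \thref{prop:formula} is built from the same Poisson random measure $\Pi$ as $\tilde M_t$ but with the extra indicator $g_s^\alpha$, so its predictable quadratic variation is pointwise dominated by that of $\tilde M_t$; hence the $L^2$-maximal-inequality bound on $\mathbf M^{(n)} \xto 0$ proved in \cite{elliot} transfers verbatim to $\mathbf M^{\alpha,(n)}$ after the same time-shift-and-rescale, giving \ref{noise}.

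For \ref{equi}, asymptotic equicontinuity of $(\mathbf A^{\alpha,(n)})$ should follow from the semimartingale decomposition \eqref{eqn:relationship}: the drift term $\CC(\mathbf A^\alpha,\mathbf A)_t$ is an integral in $s$ of a kernel that is uniformly bounded on compacts (using (C$^2$), hence boundedness of $\psi$, together with \ref{tight} to control $dF_s(z)$), so its $t$-modulus of continuity over an interval of length $\delta$ is $O(\delta)$ uniformly; the initial term $F_0(e^{-t}x)$ is continuous in $t$ and its contribution vanishes after the shift by $n$; and the martingale term is handled by \ref{noise}. Finally, \ref{continuous} is the continuity/stability statement for the operator $\CC$: given $\mathbf F^{(n)} \xto \mathbf F$ and $A^{(n)}_t = A_{t+n} \to \hat F$ (the latter by \cite[Theorem 1.1]{elliot} together with asymptotic equicontinuity of $\mathbf A^{(n)}$), one must pass to the limit inside the double integral defining $\CC(\mathbf F^{(n)},\mathbf A^{(n)})_t(x)$. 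The outer $ds$ integral and the spatial averaging are harmless; the subtle point is the convergence of $\int_{e^{s-t}x}^\infty \tfrac{\psi(A^{(n)}_s(z))}{z}\,dF^{(n)}_s(z)$ to $\int_{e^{s-t}x}^\infty \tfrac{\psi(\hat F(z))}{z}\,dF_s(z)$: one uses uniform continuity of $\psi$ (from (C$^2$)) to replace $\psi(A^{(n)}_s(z))$ by $\psi(\hat F(z))$ up to a small error, and then the hypothesis $\sup_z z\hat F'(z) < \infty$ to dominate the weight $\tfrac{\psi(\hat F(z))}{z}$ against the measures $dF^{(n)}_s$ and control the tail, after which locally-uniform convergence $F^{(n)}_s \to F_s$ and an integration-by-parts (to move the $z$-derivative of the weight onto $F^{(n)}_s - F_s$) close the argument.

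The main obstacle, I expect, is \ref{continuous} — specifically the integrability needed to pass the limit through $\int \tfrac{\psi(\hat F(z))}{z}\,dF_s(z)$, which is exactly why the extra hypothesis $\sup_z z\hat F'(z) < \infty$ is imposed there; getting a uniform-in-$n$, uniform-in-$s$-on-compacts tail bound for this integral (so that dominated convergence applies) is the delicate step. Tightness \ref{tight} is the runner-up, since it genuinely requires a new lower bound on the subdivision rate of large intervals localized to $[0,\alpha]$ rather than a transcription of \cite{elliot}.
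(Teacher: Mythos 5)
Your plan for items \ref{rate}, \ref{noise} and \ref{continuous} matches the paper's proofs in substance: \ref{rate} is the same Fubini computation, \ref{noise} uses exactly the domination $B^\alpha \le B$ of the Poisson integrands to transfer the moment bounds from \cite{elliot}, and for \ref{continuous} your decomposition --- replace $\psi(A_s^{(n)})$ by $\psi(\hat F)$ up to a uniformly small error on a truncated range, then integrate by parts and use $\sup_z z\hat F'(z)<\infty$ to bound the differentiated weight $\tfrac{d}{dz}\tfrac{\psi(\hat F(z))}{z}$ by $Cz^{-2}$ --- is precisely what the paper does. For \ref{equi} you take a genuinely different route (modulus of continuity of the drift term in the semimartingale decomposition, plus \ref{noise}) from the paper's, which instead follows \cite[Lemma 7.5]{elliot}: the monotonicity $\tilde A^\alpha_t(x)\le \tilde A^\alpha_{t+\delta}(e^{-\delta}x)\le\tilde A^\alpha_{t+\delta}(x)$, the increment bound $N^\alpha_{t+\delta}-N^\alpha_t\le N_{t+\delta}-N_t$, and item \ref{rate}. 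Your route is plausible but you have not carried out the estimate on the drift increments, which involve the rescaling $e^{s-t}$ of all past contributions and not only the new time slice.

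The genuine gap is item \ref{tight}, which you explicitly leave open and, worse, point toward an unnecessary and harder argument (a lower bound on the subdivision rate of large intervals localized to $[0,\alpha]$). No dynamical input of that kind is needed. Since $\{\alpha\}$ is among the initial points, the subintervals contained in $[0,\alpha]$ partition $[0,\alpha]$ at every time, so $A^\alpha_t(x)\le A^\alpha_t(+\infty)=\alpha$ and likewise $A^{\alpha_+}_t\le 1-\alpha$ for all $t$ and $x$. Given $\epsilon>0$, tightness of $(A_t)$ from \cite[Proposition 6.3]{elliot} yields $N$ with $A_t(N)\ge 1-\alpha\epsilon$ for all $t$; by \eqref{eq:key}, $A^\alpha_t(N)+A^{\alpha_+}_t(N)\ge 1-\alpha\epsilon$, and since the second summand is at most $1-\alpha$ this forces $A^\alpha_t(N)\ge\alpha-\alpha\epsilon$. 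That is the entire proof: the additivity relation together with the deterministic mass constraints already pins down the tail of $A^\alpha_t$, and the ``shedding of mass into large intervals'' you worry about is ruled out by bookkeeping rather than by dynamics.
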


\begin{proof}[Proof of \thref{thm:eqd}]
All statements are meant to hold almost surely. Also we abbreviate items from \thref{master} as a roman numeral. 
In the continuous process points are added as a Poisson process with intensity $e^tdt$. So, it suffices to show $e^{-t} N_t^\alpha \to \alpha$. 
%By \ref{rate} this is equivalent to proving $\xnorm{A_t^\alpha} \to \xnorm{ \alpha \hat F}$.

% We will establish this via an argument similar to Theorem 7.1.

%From \eqref{eqn:relationship} we have that $\mathbf A^{\alpha,(n)} = \CC( \mathbf A^{\ep, (n)}, \mathbf A^{(n) } ) + \mathbf M^{\alpha , (n)}.$
 By \ref{tight}, \ref{equi} and the version of the Arzel\'a-Ascoli theorem in \cite[Lemma 7.3]{elliot} we may choose a sequence $(\mathbf A^{\alpha, (n_k)})$ which converges to a family of (scaled by $\alpha$) distributions $\mathbf F^{\alpha, (\infty)}$ with $F_t^{\alpha, (\infty))}(+ \infty ) = \alpha$ for every $t \geq 0$. Taking limits in the formula at \eqref{eqn:relationship} we obtain
$$\CC(\mathbf A^{\ep, (n_k)}, \mathbf A^{(n_k) } ) + \mathbf M^{\alpha , (n_k)} \overset{ \mathcal X} \to \mathbf F^{\ep, (\infty)}.$$
%Here $\mathbf F^{(n)} \overset {\mathcal X} \to \mathbf F$ denotes local uniform convergence.
%We know from \cite[Theorem 1.1]{elliot} that $\mathbf A^{(n_k)} \overset{\mathcal X} \to \mathbf F^*$. 
By \ref{noise} and \ref{continuous} we have
$$\CC( \mathbf A^{\ep, (n_k)}, \mathbf A^{(n_k) } )\overset{\mathcal X} \to \CC^*(\mathbf F^{\ep, (\infty) }).$$
Thus, $\mathbf F^{\ep, (\infty) } \in \mathfrak F^\ep$. Since we are assuming \eqref{eq:inequality} holds, \thref{prop:cond1} implies that $ \dnorm{F_t^{\ep, (\infty) } - \alpha\hat F } \leq (2+ \delta \inv) e^{- \delta t}$. A similar argument as the conclusion of the proof of \cite[Theorem 7.1]{elliot} gives almost sure pointwise convergence $A_t^\alpha \to \alpha \hat F$. \cite[Theorem 1.1]{elliot} states that $A_t \to \hat F$ pointwise. We can then deduce from \eqref{eq:key} that $A^{\alpha_+}_t \to (1- \alpha) \hat F$. Combining pointwise convergence, $\eqref{eq:key}$ and Fatou's lemma we deduce that $\xnorm{A_t^\alpha} \to \xnorm{ \alpha \hat F}$. Indeed,
\begin{align*}
\liminf \xnorm{A_t^\alpha} &\geq \xnorm{\alpha \hat F},\\
\limsup \xnorm{A_t^\alpha} &= 1 - \liminf \xnorm{A^{\alpha_+}_t} \leq   1- (1-\alpha) = \xnorm{\alpha \hat F}.
\end{align*}
%Therefore, $\xnorm{A_t^\alpha} \to \xnorm{ \alpha \hat F}$. 
This finishes the proof since \ref{rate} states that  $\xnorm{A_t^\alpha} = e^{-t} N_t^\alpha$ and $\xnorm{\alpha \hat F}=\alpha$. 
\end{proof}

%\section{Proofs of \thref{prop:formula}, \thref{prop:cond1} %\thref{lem:special2} 
%and \thref{master}} 
\section{Proof of \thref{prop:cond1} and \thref{master}}\label{sec:proofs}

%The proof of each proposition is inspired by its analogue in \cite{elliot} but requires a slight tweak from the original.

\subsection{\thref{prop:cond1}}

The proof of \thref{prop:cond1} proceeds analogously to \cite[Lemma 4.1 and Proposition 3.4]{elliot}. A significant difference is that they apply integration by parts to $$\f 1 z d \Psi(\tilde F_s(z) ),$$ whereas our operator $\mathscr C^*$ requires applying integration by parts to $$\f{\psi(\hat F(z)) }z d \tilde F_s(z).$$ 
The requirement at \eqref{eq:inequality} arises from the extra term $\psi(\hat F(z))$. 
Also, note that we work in the norm $\dnorm{\cdot}$ to obtain the constant $ (2-\delta)$ in \eqref{eq:inequality}.

\begin{proof}[Proof of \thref{prop:cond1}]

Let $\mathbf F \in \mathfrak F^\alpha$. We consider the rescaled processes $\tilde F_t(x) = F( e^{t} x)$, $\tilde F^{\Psi}_t(x) = \hat F(e^t x)$. It then holds that $\tilde{\mathbf F} = \tilde{ \mathscr C} (\tilde {\mathbf F})$ where  
$$\tilde {\mathscr C}(\tilde{\mathbf F})_t(x) =  \tilde F_0(x) + \int_0^t e^s x^2 \int_x^\infty \f{ \psi(\hat F(z) )}{z} d \tilde F_s(z) ds.$$

Our goal is to prove the distance between $\tilde{ \mathbf F }$ and $\alpha \tilde {\mathbf{\hat F} }^*$ is decreasing in $t$:
\begin{align}
\partial_t \dnorm{ \tilde F_t - \tF } = \int_0^\infty x ^{-1 - \delta} \partial _t | \tilde F_t(x) - \tF(x) | dx \leq 0 \label{eq:suffices1}.
\end{align}
%Note it is not obvious that one can take the derivative through the integral in \eqref{eq:suffices1}. The justification can be found at the onset of \cite[Lemma 4.1]{elliot}. 

We start by differentiating under the integral sign 
\begin{align*}
\partial_t \tilde \CC( \tilde{\mathbf F } )_t(x) = e^{t} x^2 \int_x^\infty \f{ \psi(\hat F(z))} {z} d \tilde F_t(z)
\end{align*}
to write for each $x\geq0 $ the dynamics for the difference $\tilde F_t(x) - \tF(x)$ as
$$\partial _t ( \tilde F_t(x) - \tF(x)) = e^{t} x^2 I_t(x),$$
$$I_t(x) = \int_x^\infty \f{ \psi(\hat F(z) )}{z} \partial_z( \tilde F_t(z) - \tF(z) ) dz.$$
Multiply both sides by $\sgn( \tilde F_t - \tF)$ to obtain
\begin{align*}
e^{-t} \partial _t | \tilde F_t(x) - \tF(x) | = x^2 \begin{cases} \sgn ( \tilde F_t(x) - \tF(x)) I_t(x), & \tilde F_t(x) \neq \tF(x) \\ 0, & \tilde F_t(x) = \tF(x) \end{cases}.
\end{align*}
Let $\hat f(z) =  z\psi'(\hat F(z)) \hat F'(z) - \psi(\hat F(z)) $. An application of integration by parts to the integral gives
$$I_t(x) = - \f{\psi(\hat F(x)) }{x} (\tilde F_t(x) - \tF(x) ) + \int_x^\infty \f{ \hat f(z)  }{z^2} (\tilde F_t(z) - \tF(z) ) dz.$$ The previous two equations therefore yield
\begin{align*}
e^{-t} \partial _t | \tilde F_t(x) - \tF(x) |
& \leq - x \psi(\hat F(x)) | \tilde F_t(x) - \tF(x)| + x^2  \int_x^\infty |\hat f(z) | \f{ |\tilde F_t(z) - \tF(z)| }{z^2} dz.
\end{align*}
We next multiply both sides by $x^{-1 - \delta}$ and integrate with respect to $x$ from $0$ to infinity to obtain the bound 
\begin{align}
e^{-t} \int_0^\infty x ^{-1-\delta} \partial_t | \tilde F_t(x) - \tF(x) | dx  &\leq \int_0^\infty   - \psi(\hat F(x)) \f{| \tilde F_t(x) - \tF(x)|}{x^{\delta}} dx \nonumber \\
& \qquad \qquad 
 + \int_0^\infty x^{1- \delta} \int_x^\infty |\hat f(z) | \f{ | \tilde F_t(z) - \tF(z)|}{z^2} dz dx . \nonumber
\end{align}
An application of Fubini's theorem lets us rewrite the second integral as %$\int_x^\infty |\hat f(z) | \f{| \tilde F_t(z) - \tF(z) | }{z} dz.$
\begin{align*}
\int_0^\infty x^{1- \delta} \int_x^\infty |\hat f(z) | \f{ | \tilde F_t(z) - \tF(z)|}{z^2} dz dx &= \int_0^\infty |\hat f(z) | \f{ | \tilde F_t(z) - \tF(z)|}{z^2} \int_0^z x^{1- \delta } dx dz \\
&=\int_0^\infty (2 - \delta)\inv |\hat f(z) | \f{| \tilde F_t(z) - \tF(z) | }{z^{\delta  }} dz.
\end{align*}
Hence we can combine the integrals to obtain the bound
\begin{align}
e^{-t} \int_0^\infty x ^{-2} \partial_t | \tilde F_t(x) - \tF(x) | dx & \leq \int_0^\infty   \Big((2-\delta)\inv |\hat f (z)| - \psi(\hat F(z))\Big) \f{|\tilde F_t(z) - \tF(z)|}{z^{\delta}}  dz . \nonumber %\label{eq:motivation} 
\end{align}
Our hypothesis \eqref{eq:inequality} guarantees that the term inside the integral:
$$(2-\delta)\inv |\hat f (z)| - \psi(\hat F(z))\leq 0.$$
Therefore \eqref{eq:suffices1} holds. 
This establishes that 
\begin{align}\dnorm{\tilde F_t - \tF} \leq \dnorm{ \tilde F_0 - \alpha \tilde F^\Psi_0} = \dnorm{ F_0 - \alpha \hat F}. \label{eq:=1}
\end{align}
 A change of variables $x = e^{-t} z$ gives %as in the proof of \cite[Proposition 3.4]{elliot} gives 
\begin{align}
\dnorm{F_t - \alpha \hat F}  &= \int_0^\infty x^{-1 - \delta} |  F_t(x) - \alpha \hat F(x)|dx\nonumber \\
 &= e^{-\delta t} \int_0^\infty z^{-1 - \delta} | \tilde F_t(z) -  \tF(z) | dz \nonumber\\
 &= e^{-\delta t}\dnorm{ \tilde F_t - \tF}\nonumber\\
 &\leq e^{-\delta t} \dnorm{ F_0 - \alpha \hat F},
\end{align}
where at the last line we apply \eqref{eq:=1}.

It remains to prove that $\dnorm{ F_0 - \alpha \hat F} \leq C$, for some $C>0$. By assumption, $\mathbf F\in \mathcal X_1$ and therefore $\xnorm{ F_0} \leq 1$. As $0 \leq F_0(x) \leq 1$ we can break up the integral and use integrability of $x^{-1- \delta}\ind{x >1}$:
$$\int_0^\infty x^{-1-\delta} F_0(x) dx \leq \int_0^1 x^{-2} F_0(x) dx + \int_1^\infty x^{-1-\delta} dx \leq \xnorm{F_0} + \delta^{-1} \leq 1 + \delta \inv.$$
Similarly, $\dnorm{\alpha \hat F} \leq 1  + \delta \inv$. Apply the triangle inequality to conclude $\dnorm{F_0 - \alpha \hat F} \leq \dnorm{F_0} + \dnorm{\alpha \hat F} \leq 2(1+ \delta\inv).$
\end{proof}

\subsection{\thref{master}} \label{proofs}

In \thref{master} we prove that $ A_t^\alpha$ and $A_t$ have similar properties. Each statement requires some manipulation. Fortunately \cite{elliot} contains much of the heavy-lifting. 
We make one remark concerning the proof of \ref{continuous}. In \cite{elliot} they prove continuity of an operator $\mathscr S^\Psi$ with domain $\mathcal X$. Our operator $\mathscr C$ has domain $\mathcal X \times \mathcal X$. This makes the proof more involved, and also restricts us to proving continuity in sequences of the form $ (\mathbf F^{(n)}, \mathbf  A^{(n)} )$.

\begin{proof}[Proof of \ref{rate}]  
   
      The equality $\xnorm{\alpha \hat F} = \alpha$ is \cite[Lemma 3.5]{elliot}. For the other equality, take $I^{\alpha, (t)}_j$ to be the length of an interval in $[0,\alpha]$. Define the measure $\mu_t^\alpha = e^{-t} \tsum_{1}^{N_t^\alpha} \delta_{e^tI^{\alpha, (t)}_j}.$
    This gives $\mu_t^\alpha$ is the empirical distribution of rescaled interval lengths. We can then write $$A_t^\alpha(x) = \int_0^xy \mu_t(dy).$$ Applying Fubini's theorem shows that $$\xnorm{ A_t^\alpha} = \int_0^\infty x^{-2} \int_0^xy \mu_t^\alpha(dy) dx = \int_0^\infty \mu_t^\alpha(dy) = e^{-t}N_t^\alpha.$$
\end{proof}

\begin{proof}[Proof of \ref{tight}]
Recall that a family of distributions $(F_t)_{t \geq 0}$ is \emph{tight} if for all $\epsilon>0$ there exists $N$ such that $F_t(N) \geq 1- \epsilon$ for all $t \geq 0$.  
\cite[Proposition 6.3]{elliot} implies $(A_t)_{t \geq 0}$ is tight. Fix $\epsilon>0$ and let $N$ be such that $A_t(N) \geq 1 - \alpha \epsilon$ for all $t \geq 0$.
The relationship at \eqref{eq:key} ensures $A^{\alpha}_t(N) + A^{\alpha_+}_t(N) \geq 1 - \al \epsilon.$
As $A_t^{\ep} \leq \al $ and $A_t^{\alpha_+} \leq 1 -\al $, this inequality could only hold if $A_t^\ep(N) \geq \alpha - \al \epsilon$ for all $t\geq 0$. Hence, $(\f 1 \al A_t^\al)_{t \geq 0 }$ is tight.
\end{proof}

\begin{proof}[Proof of \ref{equi}]
Recall, that a family of functions $( \mathbf F^{(n)})_{n \in \mathbb N}$ in $\mathcal X$ is asymptotically equicontinuous if for every compact $K \subset [0,\infty)$,
$$\lim_{\delta \to 0} \lim_{n \to \infty }  \sup_{\substack{s,t \geq 0 \\ |s-t| \leq \delta } }  \int_K  | F_s^{(n)}(x) - F_s^{(n)}(x) | dx  = 0.$$ 
The proof is similar to \cite[Lemma 7.5]{elliot}. The idea is that it suffices to show the existence of a $\delta_0>0$ and constant $C$ so that for every $0<\delta_1 < \delta_0$ there exists almost surely a $T_{\delta_1}< \infty$ so that 
\begin{align}\sup_{ t \geq T_{\delta_1}, 0 \leq \delta \leq \delta_1} \int_0^\infty \f{ | A_{t+ \delta}^\alpha(x) - A_t^\alpha(x) | }{ x^2} dx \leq C \delta_1.\label{eq:as2}	
\end{align}

This is sufficient since we for any $\delta_1 >0$ and any $M>0$, almost surely
\begin{align*}
\lim_{n \to \infty} \sup_{ s,t \geq 0, |s-t| \leq \delta_1} \int_0^M | A_s^{\alpha, (n)}(x) - A_t^{\alpha, (n)}(x) | dx & \leq \sup_{ t \geq T_{\delta_1}, 0 \leq \delta \leq \delta_1} \int_0^\infty \f{ | A_{t+ \delta}^\alpha(x) - A_t^\alpha(x) | }{ x^2} dx \\
	&	\leq M^2 C \delta_1.
\end{align*}
As this holds jointly with probability 1 for a countable sequence of $\delta_1$ going to 0 and $M \in \mathbb N$, the asymptotic equicontinuity of $(A^{(n)})_{n \geq 0}$ follows. 

The formula at \eqref{eq:as2} follows from the fact that
 $\tilde A^\alpha_t$ satisfies the monotonicity condition, for any $\delta >0$, 
	\begin{align}
	\tilde A_t^\alpha(x)	\leq \tilde A^\alpha_{t+\delta}( e^{-\delta} x ) \leq \tilde A^\alpha_{t+ \delta}(x). \label{eq:ae1}
	\end{align}
 Another necessary fact is that number of points kept in $[0,\alpha]$ from time $t$ to $t+\delta$ is bounded by the number of points added to $[0,1]$ in that same time interval. Formally, for any $\delta >0$ we have $N_{t+\delta}^\alpha - N_t^\alpha \leq N^1_{t+\delta} - N^1_t$. This lets us deduce the equivalent for $N_t^\alpha$ as for $N_t$ in \cite[Lemma 7.6]{elliot}. Namely, that there is a $\delta >0$ so that for every $0 < \delta < \delta_0$ there exists almost surely a $T_\delta < \infty$ so that
 $$\sup_{t \geq T_\delta} N_{t+\delta}^\alpha - N_t^\alpha \leq 2 \delta e^t.$$
 The argument finishes by using the formula from \thref{master} \ref{rate} for $N_t^\alpha$ in terms of $\xnorm{ A_t^\alpha}$. See the proof of \cite[Lemma 7.5]{elliot} for further details.
\end{proof}

%\subsubsection{Decay of Noise}

\begin{proof}[Proof of \ref{noise}]
The proof is similar to the decay of the noise subsection in \cite[Section 7]{elliot}. The idea is to bound the martingale $\mathbf M^\alpha$ by computing various moments of the underlying process $\mathbf B^\alpha$.   We can use the same bounds as in \cite{elliot} because points are added to $[0,\alpha]$ no faster than to $[0,1]$. This ensures that $B^\ep(s,u,v,x) \leq B(s,u,v,x)$. Here $B(s,u,v,x)$ is the function defined at \cite[(3)]{elliot}. %This allows a nearly identical argument to go through.
\end{proof}

\begin{proof}[Proof of \ref{continuous}]
Suppose that $\mathbf F^{(n)} \xto \mathbf F$. An equivalent notion of convergence in the topology of local uniform convergence is that $\mathbf F^{(n)} \overset{\mathcal X} \to \mathbf F$ if and only if for all compact $K \subset [0,\infty)$
$$\lim_{n \to \infty} \sup_{0 \leq s \leq t} \int_K |F^{(n)}_s(x) - F_s(x) | dx = 0.$$

\cite[Theorem 7.1]{elliot} implies $\mathbf A^{(n)} \xto \mathbf F^*$. Thus it suffices to prove for any fixed $T > 0$ and $K >0$ 
\begin{align}
\int_0^K |\mathscr C(\mathbf F, \mathbf F^*)_t(x)- \mathscr C(\mathbf F^{(n)}, \mathbf A^{(n)})_t(x)  | dx \to 0 \label{eq:0.1}
\end{align}
uniformly for $t \leq T$.
For fixed $n$ we can write
\begin{align*}
\mathscr C(\mathbf F^{(n)}, \mathbf A^{(n)})_t(x)&= F_0^{(n)}(x) + \int_0^t (e^{s-t} x)^2 \int_{e^{s-t} x}^\infty \f{ \psi(A_s^{(n)}(z))}{z} d F^{(n)}_s(z) ds.
\end{align*}
If we write $\psi( A_s^{(n)}(z)) = \psi(\hat F(z)) + \psi( A_s^{(n)}(z)) - \psi(\hat F(z))$  the above becomes
\begin{align*}
\mathscr C(\mathbf F^{(n)}, \mathbf A^{(n)})_t(x)&= {\mathscr C( \mathbf F^{(n)}, \mathbf F^*)_t(x)} + {\int_0^t (e^{s-t} x)^2 \int_{e^{s-t} x}^\infty \f{ \psi( A_s^{(n)}(z)) - \psi(\hat F(z)) }{z} d F^{(n)}_s(z) ds}.
\end{align*}
We can then bound the left side of \eqref{eq:0.1} by
\begin{align}
\int_0^K |\mathscr C(\mathbf F &, \mathbf F^*)_t(x) - \mathscr C( \mathbf F^{(n)}, \mathbf F^*)_t(x) | dx \label{eq:term1} \\
&+ \int_0^K {\int_0^t (e^{s-t} x)^2 \int_{e^{s-t} x}^\infty \f{ |\psi( A_s^{(n)}(z)) - \psi(\hat F(z)) |}{z} d F^{(n)}_s(z) ds} dx. \label{eq:term2}
\end{align}

It suffices to show that as $n \to \infty$ each summand converges to zero uniformly for $t \leq T$.%\begin{enumerate}[label = (\alph*)]
    %\item
    \vspace{.2 cm}
    
\subsubsection*{First summand}  Start by bounding the summand at \eqref{eq:term1} by
    \begin{align*}
   \int_0^K |F_0(e^{-t}x) - F^{(n)}_0(e^{-t}x)| dx + \int_0^K\int_0^t (e^{s-t} x)^2 \bigg     |\int_{e^{s-t} x}^\infty \f{ \psi(\hat F(z) )}{z} d( F_s(z) - F^{(n)}_s(z) )  \bigg|ds dx.
    \end{align*}
The first quantity goes to zero uniformly for $t \leq T$ by the definition of $\mathbf F^{(n)} \xto \mathbf F$ since a change of variables gives
$$\int_0^K |F_0(e^{-t}x) - F^{(n)}_0(e^{-t}x)| dx \leq e^t \int_0^K | F_0(x) - F^{(n)}_0(x) | dx.$$

% $\mathbf F^{(n)} \overset{\mathcal X} \to \mathbf F$.
  Expand the interior of the second quantity with integration by parts and take the absolute value signs inside to bound it by
$$  \underbrace{\f{ \psi(\hat F(e^{s-t} x))}{e^{s-t}x } | F_s(e^{s-t} x) dx - F_s^{(n)}(e^{s-t} x)|}_{\text{term one}} + \underbrace{\int_{e^{s-t} x}^\infty  \bigg| \f{d}{dz}\f{ \psi(\hat F(z) )}{z} \bigg| | F_s(z) - F^{(n)}_s(z) | dzdx}_{\text{term two}} .$$
Multiply term one by $(e^{s-t} x)^2$ and integrate so it becomes
$$\int_0^K \int_0^t (e^{s-t} x) \psi( \hat F(e^{s-t}x) ) |F_s(e^{s-t} x)- F_s^{(n)}(e^{s-t} x)| dsdx.$$
Since $\hat F$ is a distribution function and $\psi$ is continuous we have $(\psi \circ \hat F)(u) \leq \sup_{u \in [0,1]} \hat \psi(u) < D < \infty$ for some constant $D$. Thus, the above is bounded by
$$D\int_0^K \int_0^t (e^{s-t} x)|F_s(e^{s-t} x)- F_s^{(n)}(e^{s-t} x)| dx.$$

%This puts us in the case of $I_1$ from \cite[Lemma 3.3]{elliot} and so converges to zero uniformly for $t \leq T$.  

The above goes to zero by the definition of $\mathbf F^{(n) } \overset{\chi} \to \mathbf F$. 
As for term two, we differentiate to rewrite it as
\begin{align}
\int_{e^{s-t} x}^\infty\f{| z\psi'(\hat F(z) ) \hat F'(z) - \psi(\hat F(z))| }{z^2}  |F_s(z) - F^{(n)}_s(z) | dz . \label{eq:1.1}
\end{align}
Our additional hypothesis is that $z \hat F'(z)$ is bounded. Since the range of $\hat F$ is contained in the compact interval $[0,1]$ and $\Psi \in C^2$ we have $\psi\circ \hat F$ and $\psi'\circ \hat F$ are also bounded. Therefore, $C = \sup_{0 \leq z \leq \infty} | z \hat F'(z)\psi'(\hat F(z) ) - \psi( \hat F(z) )|< \infty$. It follows that \eqref{eq:1.1} is less than
\begin{align}
C\int_{e^{s-t} x}^\infty\f{1}{z^2}  |F_s(z) - F^{(n)}_s(z) | dz  \label{eq:1.2}.
\end{align}
Finally we are in the position of $I_2$ from \cite[Lemma 3.3]{elliot} and can conclude that \eqref{eq:1.2} goes to zero uniformly for $t \leq T$.
\item

%\end{enumerate}
    \vspace{.2 cm}
\subsubsection*{Second summand} Fix $M>0$ and for any function $f:[0,\infty) \to [0,1]$ define $ f^M = f|_{[0,M]}$ to be the restriction to the domain $[0,M]$. We have in \cite[Theorem 7.1]{elliot} that $A^{M}$ converges pointwise to $\hat F^M$. Observe that each $A^{M}_t$ is an increasing function with compact domain, and $\hat F^M$ is continuous by \cite[Lemma 3.5]{elliot}. Together these imply (see  \cite[exercise 7.13]{rudin}) that for any $\epsilon >0$ there exists $t_\epsilon$ such that for all $z \in [0,M]$
$$\sup_{t \geq  t_\epsilon} | A_t^{M}(z) - \hat F_t^M(z)| < \epsilon.$$
 Because the functions $A^{(n)}_t$ are translates of $A_t$ it follows that for all $n > t_\epsilon$ we have
\begin{align*} \sup_{t \geq 0 }| A_t^{(n),M}(z) - \hat F_t^M(z)|&\leq \sup_{t \geq t_\epsilon} | A_{t}^M(z) - \hat F_t^M(z)| < \epsilon .\end{align*}
As the functions $A_t^{(n)}$ and $\hat F$ are supported on $[0,1]$, we have their compositions with $\psi$ are uniformly continuous. We conclude that there exists $n_0$ such that for all $z \in [0,M]$
\begin{align}
\sup_{t \geq 0 } |\psi( A_t^{(n)}(z)) - \psi(\hat F(z))| < \epsilon, \qquad  \text{for } n \geq n_0. \label{eq:2.1}
\end{align}

We truncate the integral then apply \eqref{eq:2.1} to bound the absolute value of \eqref{eq:term2} by
\begin{align}
\epsilon \int_0^K \int_0^t (e^{s-t} x)^2 &\int_{e^{s-t} x}^M \f{ 1 }{z} d F^{(n)}_s(z) ds dx \label{eq:2.00}\\
&+ \int_0^K\int_0^t (e^{s-t} x)^2 \int_{M}^\infty \f{ |\psi( A_s^{(n)}(z)) - \psi(\hat F(z))| }{z} d F^{(n)}_s(z) ds dx \label{eq:2.01}.
\end{align}

We can use the fact that $F_s^{(n)}(z) \leq 1$ and bound the inside integral of \eqref{eq:2.00} by
$$\f{ 1 }{ e^{s-t x}} \int _{e^{s-t}x}^M d F_s^{(n)}(z) \leq \f{2}{ e^{s-t}x}.$$
Thus \eqref{eq:2.00} is bounded by
$$\epsilon \int_0^K \int_0^t 2 e^{s-t} x ds dx\leq \epsilon (1 - e^{-t}) K^2 \leq \epsilon K^2.$$
As $K$ is fixed, this can be made arbitrarily small.
\begin{comment}
Integrate the in
\begin{align*}
\f{ F_s^{(n)}(M)}{M} - \f{ F_s^{(n)}(e^{s-t} x)}{e^{s-t} x}+  \int_{e^{s-t} x}^M  \f {F_s^{(n)}(z)} {z^2} d z.
\end{align*}
Using the fact that $F_s^{(n)}(z) \leq 1$ this is bounded by $\f {1}{M} + \f 1 { e^{s-t}x}$. Multiplying by $(e^{s-t} x)^2$ and integrating gives the following bound on \eqref{eq:2.00}
\begin{align*}\epsilon \int_0^K\int_0^t (e^{s-t} x)^2 \int_{e^{s-t} x}^M \f{ 1 }{z} d F^{(n)}_s(z) ds dx&\leq \epsilon \int_0^K\int_0^t(e^{s-t} x)^2 \f 1 M + e^{s-t}x \: ds dx\\
&= \epsilon \int_0^K \f {x^2}{2M}( 1- e^{-2t}) + (1- e^{-t})dx  \\
&\leq \epsilon (\f{  K^3}{M} + K^2) .
\end{align*}
Thus, \eqref{eq:2.00} can be made arbitrarily small. 
\end{comment}

Lastly we consider \eqref{eq:2.01}. Since $\sup_{u \geq 0} \psi(u) = D < \infty$ we use similar estimates as in \eqref{eq:2.00} and start with the bound
\begin{align*}
\int_0^K\int_0^t (e^{s-t} x)^2 \int_{M}^\infty &\f{ |\psi( A_s^{(n)}(z)) - \psi(\hat F(z))| }{z} d F^{(n)}_s(z) ds dx \\
&\qquad \qquad \leq 4 D \int_0^K\int_0^t (e^{s-t} x)^2 \f{ 1 }{ M} ds dx \\
& \qquad \qquad  \leq \f {4D K^3(1 - e^{-2t}) } {6M} .
\end{align*}
Since $M$ can be made arbitrarily large, this can be made as small as we like. Therefore, the absolute value of \eqref{eq:term2} can be bounded by any $\epsilon>0$ uniformly for $t \leq T$. 
\end{proof}

\begin{lemma} \thlabel{lem:extra}
If $\Psi$ satisfies $(\text{\emph{C}}^2)$ and either $\psi(1) >0$ or $\Psi(u) = 1 - (1-u)^k$ for some positive integer $k$ then $\sup_{z \geq 0 }z \hat F'(z) < \infty.$ 
\end{lemma}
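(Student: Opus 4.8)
The goal is to control $z\hat F'(z)$ for large $z$ (boundedness near $z=0$ is automatic since $\hat F$ is $C^1$ and $\hat F'(0)=0$, as \eqref{eq:integro} shows $\hat F'(z)\le z\int_z^\infty y^{-1}d\Psi(\hat F(y))$ which vanishes at $0$). So the crux is the asymptotics of $\hat F'(z)$ as $z\to\infty$. The natural tool is the differential equation \eqref{eq:diff}, which we rewrite as
\begin{align*}
(z\hat F'(z))' = (z\hat F'(z))'\big|_{\text{from }zF''+F'} = 2\hat F'(z) - z\psi(\hat F(z))\hat F'(z),
\end{align*}
wait — more precisely, from \eqref{eq:diff} we have $z\hat F''(z) = \hat F'(z) - z\psi(\hat F(z))\hat F'(z)$, so differentiating $z\hat F'(z)$ gives $(z\hat F'(z))' = \hat F'(z) + z\hat F''(z) = 2\hat F'(z) - z\psi(\hat F(z))\hat F'(z)$. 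Thus, writing $g(z) = z\hat F'(z)$, we get the linear ODE $g'(z) = \big(\tfrac{2}{z} - \psi(\hat F(z))\big)g(z)$, whose solution is $g(z) = g(z_0)\exp\!\big(\int_{z_0}^z (\tfrac2y - \psi(\hat F(y)))\,dy\big) = C z^2 \exp\!\big(-\int_{z_0}^z \psi(\hat F(y))\,dy\big)$ for $z\ge z_0>0$. So the whole question reduces to showing $z^2 \exp(-\int^z \psi(\hat F(y))\,dy)$ stays bounded, i.e. that $\int^z \psi(\hat F(y))\,dy \ge 2\log z - O(1)$ eventually.

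First I would handle the case $\psi(1) > 0$. Since $\hat F$ is a distribution function with $\hat F(z)\to 1$ and $\psi$ is continuous, $\psi(\hat F(y)) \to \psi(1) > 0$; hence for large $y$, $\psi(\hat F(y)) \ge \psi(1)/2 > 0$, so $\int^z \psi(\hat F(y))\,dy$ grows at least linearly in $z$, which dominates $2\log z$. Therefore $g(z) = Cz^2 e^{-\int^z\psi(\hat F)}\to 0$, and in particular $\sup_{z\ge z_0} z\hat F'(z) < \infty$; combined with continuity on $[0,z_0]$ this gives the claim.

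The harder case is $\Psi(u) = 1-(1-u)^k$ (the $\min$-$k$ process), where $\psi(u) = k(1-u)^{k-1}$, so $\psi(1) = 0$ when $k \ge 2$ and the linear-growth argument fails; I need the sharper estimate that $\int^z \psi(\hat F(y))\,dy \ge 2\log z - O(1)$. For this I would use \eqref{eq:integro} to pin down the tail behavior of $1 - \hat F(z)$. With $\psi(u) = k(1-u)^{k-1}$, the identity \eqref{eq:diff} can be rewritten; alternatively, from \eqref{eq:integro}, $\hat F'(z) = z\int_z^\infty y^{-1}\psi(\hat F(y))\hat F'(y)\,dy$. A cleaner route: combine the two displayed facts to get an equation purely in terms of $w(z) := 1-\hat F(z)$. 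From $g'(z)/g(z) = 2/z - \psi(\hat F(z))$ with $\psi(\hat F) = k w^{k-1}$ and $g = z\hat F' = -zw'$, one finds $(zw'(z))' / (zw'(z)) = 2/z - k w(z)^{k-1}$; this is a closed second-order ODE for $w$. The expected asymptotic is $w(z) \sim c\, z^{-1/(k-1)}$ (so that $k w^{k-1} \sim c' z^{-1}$ is a lower-order correction and $\int^z k w^{k-1} = (c'+o(1))\log z$ — but I need the constant $c'$ to be at least $2$). Actually the self-consistent power law: if $w \sim c z^{-\beta}$ then $zw' \sim -c\beta z^{1-\beta}$, $(zw')' \sim -c\beta(1-\beta)z^{-\beta}$, so $(zw')'/(zw') \sim (1-\beta)/z$, which must equal $2/z - k c^{k-1} z^{-\beta(k-1)}$; matching forces $\beta(k-1) = 1$, i.e. $\beta = 1/(k-1)$, and then $1-\beta = 2 - k c^{k-1}$, giving $k c^{k-1} = 1 + \beta = k/(k-1)$, so $\int^z k w^{k-1}\,dy = \tfrac{k}{k-1}\log z + O(1)$. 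Since $\tfrac{k}{k-1} > 1$ but possibly $< 2$, this bound alone is \emph{not} enough — so I must be more careful. Re-examining: $g(z) = Cz^2 e^{-\int^z \psi(\hat F)} = C z^2 \cdot z^{-k/(k-1)} \cdot (1+o(1)) = C z^{2 - k/(k-1)}(1+o(1)) = C z^{(k-2)/(k-1)}(1+o(1))$, which is \emph{unbounded} for $k \ge 3$. This signals that the leading power-law ansatz for $w$ must be wrong, or there is a subtlety.

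\textbf{Revised plan.} The apparent contradiction above tells me the correct analysis of \eqref{eq:integro} for $\Psi(u)=1-(1-u)^k$ must give faster-than-power decay of $w(z) = 1-\hat F(z)$. Indeed, I suspect $w(z)$ decays like $\exp(-c z)$ or a stretched exponential, making $\int^z k w^{k-1}$ converge — and then $g(z) \approx C z^2$, which is \emph{still unbounded}! So even this does not work directly. The resolution must be that $\hat F'$ actually has \emph{compact support} or super-polynomial decay for the $\min$-$k$ process: intuitively, $\min$-$k$ keeps points in small gaps, so large rescaled gaps are heavily suppressed, and in fact $\hat F(z) = 1$ for all $z$ beyond some finite point — then $\hat F'(z) = 0$ there and $z\hat F'(z)$ is trivially bounded. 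Concretely, I would argue: from \eqref{eq:integro}, if $\hat F(z_1) = 1$ for some finite $z_1$ then $\hat F'(z) = 0$ for $z > z_1$ and we are done; and the ODE $g'/g = 2/z - k w^{k-1}$ is consistent with $w$ hitting $0$ in finite ``time'' because near such a point $k w^{k-1} \to 0$ and the equation degenerates. Alternatively — and this is probably the cleanest rigorous path — I would use \eqref{eq:diff} directly: suppose for contradiction $z\hat F'(z)$ is unbounded; since $g = z\hat F'$ satisfies $g' = (2/z - \psi(\hat F))g$ and $g \ge 0$, unboundedness forces $\int^\infty \psi(\hat F(y))\,dy < \infty$ up to the $2\log$ correction, which via the integro-differential constraint \eqref{eq:integro} — precisely $\hat F'(z) = z\int_z^\infty y^{-1}\,d\Psi(\hat F(y))$ and the normalization $\int_0^\infty z^{-2}\hat F'(z)\,dz$-type identities from \cite[Lemma 3.5]{elliot} — is incompatible with $\hat F$ being a \emph{probability} distribution function (the tail $1-\hat F$ would not be integrable against the required weight). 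I would make this precise by the substitution $g = z\hat F'$, integrating the ODE to get $z\hat F'(z) = C z^2 \exp(-\int_{z_0}^z \psi(\hat F(y))\,dy)$ and then using \eqref{eq:integro} to get a second, independent expression for $\hat F'(z)$ in terms of the tail of $d\Psi(\hat F)$; equating and analyzing forces $\int^z \psi(\hat F(y))\,dy \ge 2\log z - O(1)$, and in the $\min$-$k$ case this is where I'd invoke the specific form $\psi = k(1-u)^{k-1}$ together with a bootstrap on the decay rate of $1-\hat F$.

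\textbf{Main obstacle.} The real difficulty — and where I would spend the bulk of the effort — is the $\min$-$k$ case: establishing the precise tail decay of $1 - \hat F(z)$ from the integro-differential equation \eqref{eq:integro} well enough to conclude $z\hat F'(z)$ is bounded. The case $\psi(1) > 0$ is short (linear growth of $\int^z\psi(\hat F)$ beats $2\log z$). For $\min$-$k$ I expect one must show $\hat F$ reaches $1$ in finite range, or at least decays fast enough that the $z^2$ factor in $g(z) = Cz^2e^{-\int^z\psi\circ\hat F}$ is killed — and pinning down that rate rigorously (rather than heuristically via power-law ansatz, which as shown above is misleading) is the crux; I would do it by a careful monotonicity/bootstrap argument on $w = 1-\hat F$ using \eqref{eq:diff} rewritten as a second-order ODE for $w$, or by directly exhibiting the finite right endpoint of $\supp(\hat F')$ from \eqref{eq:integro}.
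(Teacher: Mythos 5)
The case $\psi(1)>0$ of your argument is correct and is in substance what the paper relies on: integrating \eqref{eq:diff} gives $z\hat F'(z)=Cz^{2}\exp\bigl(-\int_{z_0}^{z}\psi(\hat F(y))\,dy\bigr)$, and since $\psi(\hat F(y))\to\psi(1)>0$ the exponential dominates $z^2$. (The paper simply cites \cite[Proposition 8.2]{elliot}, which yields the bound $\hat F'(x)\le Ce^{-ax}$ by exactly this route.)

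The $\min$-$k$ case, however, is a genuine gap, and the heuristic that derails you contains an arithmetic slip. With $w=1-\hat F\sim cz^{-\beta}$ you have $zw'\sim -c\beta z^{-\beta}$, not $-c\beta z^{1-\beta}$, so $(zw')'/(zw')\sim -\beta/z$ rather than $(1-\beta)/z$. Matching against $2/z - kc^{k-1}z^{-\beta(k-1)}$ then forces $\beta(k-1)=1$ and $kc^{k-1}=2+\beta$, whence $\int^{z}kw^{k-1}\,dy=(2+\beta)\log z+O(1)$ and $g(z)=Cz^{2}e^{-\int^{z}\psi(\hat F)}\sim Cz^{-\beta}\to 0$: the power-law ansatz is perfectly self-consistent with boundedness, and the ``contradiction'' you found is spurious. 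Consequently your revised plan --- that $1-\hat F$ must decay super-polynomially or that $\hat F'$ has compact support --- is chasing a phantom and is in fact false: the correct behavior is power-law, $\hat F'(x)\le C_k x^{-1-\epsilon_k}$, which is precisely the estimate the paper invokes from \cite[Proposition 8.4]{elliot} (its entire proof of the lemma consists of citing Propositions 8.2 and 8.4 of \cite{elliot}). Even setting the slip aside, you never actually establish any tail estimate for $1-\hat F$ in the $\min$-$k$ case --- you only sketch a bootstrap you do not carry out --- so the second half of the lemma remains unproved.
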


\begin{proof}
\cite[Proposition 8.2]{elliot} states that when $\psi(1)>0$ it holds that $\hat F'(x) \leq C e^{-ax}$ for some constants $C,a >0$. Additionally, for the $\min$-$k$ process $(\Psi(u) = 1 - (1-u)^k)$ it is shown in \cite[Proposition 8.4]{elliot} that $\hat F'(x) \leq C_k x^{ -1 - \epsilon_k}$ for some $C_k,\epsilon_k >0$. Note that $\sup _{k \geq 0} C_k < \infty$ and $\epsilon_k \to 0$.  
\end{proof}

\begin{cor} \thlabel{cor:extra}
	
From \thref{lem:extra} $z \hat F'(z)$ is bounded for all interpolations of the $\max$-$k$ and $\min$-$k$ processes.
\end{cor}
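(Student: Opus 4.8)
The plan is to reduce \thref{cor:extra} entirely to \thref{lem:extra} by checking that any interpolation of $\max$-$k$ and $\min$-$k$ processes falls into one of the two cases covered there. Recall that such an interpolation has distribution function
\begin{align*}
\Psi(u) = p_1 u + \sum_{k \geq 2} p_k u^k + p_{-k}\bigl(1 - (1-u)^k\bigr),
\end{align*}
where $\mathbf p = (p_k)$ is a probability measure on $\Z \setminus \{0\}$ (allowing $p_1$ to encode the uniform process). Such $\Psi$ is automatically $(\text{C}^2)$: it is a (convergent, since coefficients are summable after differentiating twice on $[0,1]$) series of polynomials, so termwise differentiation is justified and $\Psi'' $ is continuous on $[0,1]$. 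So the only thing to verify is the dichotomy $\psi(1) > 0$ or $\Psi(u) = 1 - (1-u)^k$.

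First I would dispose of the degenerate case: if $\mathbf p$ is supported entirely on a single negative index $-k$, i.e.\ $p_{-k} = 1$, then $\Psi(u) = 1 - (1-u)^k$ exactly, which is the second case of \thref{lem:extra}, and we are done. Otherwise, at least one of the following holds: $p_1 > 0$, or $p_k > 0$ for some $k \geq 2$, or $p_{-j} > 0$ and $p_{-k} > 0$ for two distinct positive integers $j \neq k$. In each of these subcases I claim $\psi(1) > 0$. Computing $\psi(u) = \Psi'(u) = p_1 + \sum_{k \geq 2} k p_k u^{k-1} + k p_{-k}(1-u)^{k-1}$ and evaluating at $u = 1$: the term $(1-u)^{k-1}$ vanishes at $u=1$ for every $k \geq 2$, so only the $\max$-type and uniform contributions survive, giving
\begin{align*}
\psi(1) = p_1 + \sum_{k \geq 2} k\, p_k .
\end{align*}
Hence $\psi(1) > 0$ precisely when $p_1 > 0$ or some $p_k > 0$ with $k \geq 2$. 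This covers the first two subcases directly. For the remaining subcase — where $\mathbf p$ is supported only on negative indices but on at least two of them — the formula gives $\psi(1) = 0$, so we cannot use the $\psi(1)>0$ branch; instead I would note that \thref{lem:extra}'s proof for $\min$-$k$ rests on \cite[Proposition 8.4]{elliot}, and one should check that this extends to a convex combination of $\min$-$k$ processes, i.e.\ that $\hat F'$ still decays polynomially.

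This last point is the main obstacle and deserves care: a priori, a mixture of $\min$-$k$ processes is not literally a $\min$-$k$ process, so $\hat F'(x) \leq C_k x^{-1-\epsilon_k}$ is not immediate. However, \thref{lem:extra} already records that $\sup_k C_k < \infty$, and $\epsilon_k \to 0$ — this is exactly the data one needs. The integro-differential equation \eqref{eq:integro} shows that for a mixture $\Psi = \sum_k p_{-k}\Psi_{-k}$, the decay of $\hat F'$ is governed by the slowest-decaying component, so one obtains $\hat F'(x) \leq C x^{-1-\epsilon}$ where $\epsilon = \inf\{\epsilon_k : p_{-k} > 0\}$; as long as $\mathbf p$ has support bounded away from nothing problematic — which it does, being a fixed probability measure — we may take $\epsilon > 0$, whence $\sup_z z\hat F'(z) < \infty$. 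Alternatively, and more cleanly, one can observe that the tail behavior argument of \cite[Proposition 8.4]{elliot} only uses the condition (D) with $\kappa_\Psi$ finite together with $\psi$ bounded, both of which hold for every interpolation; the polynomial decay rate it produces depends continuously on $\kappa_\Psi$, and a finite mixture has a finite $\kappa_\Psi$. Either route closes the gap, and combining all subcases with \thref{lem:extra} yields $\sup_{z \geq 0} z\hat F'(z) < \infty$ for every interpolation of $\max$-$k$ and $\min$-$k$ processes.
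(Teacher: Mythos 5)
Your case analysis of which interpolations fall under the two branches of \thref{lem:extra} is correct, and it is more explicit than the paper, which simply asserts that the corollary follows from the lemma. The computation $\psi(1) = p_1 + \sum_{k \geq 2} k\,p_k$ is right, so every interpolation with a uniform or $\max$-$k$ component has $\psi(1) > 0$, and a pure $\min$-$k$ process is exactly the lemma's second branch. You have also correctly isolated the one case the lemma does not literally cover: a mixture supported on two or more negative indices and nothing else, for which $\psi(1) = 0$ and $\Psi$ is not of the form $1-(1-u)^k$.

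The gap is in your treatment of that remaining case. The claim that \eqref{eq:integro} shows the decay of $\hat F'$ for a mixture is ``governed by the slowest-decaying component'' does not follow: \eqref{eq:integro} is nonlinear in $F$ (through $\Psi \circ F$), so $F^\Psi$ for $\Psi = \sum_k p_{-k}\Psi_{-k}$ is in no sense a superposition of the individual $F^{\Psi_{-k}}$, and there is no componentwise comparison to invoke. Your alternative route --- that the argument of \cite[Proposition 8.4]{elliot} uses only condition (D) with finite $\kappa_\Psi$ and boundedness of $\psi$ --- is the right kind of statement to aim for, but as written it is an unverified assertion about the internals of another proof, not an argument. (A secondary inaccuracy: (C$^2$) is not automatic for an arbitrary probability measure $\mathbf p$; continuity of $\Psi''$ at $u=1$ requires $\sum_{k\geq 2} k(k-1)p_k < \infty$.) Note, however, that the gap concerns only the corollary's literal generality: every interpolation for which \thref{cor:extra} is actually invoked in the proof of \thref{cor:main} either satisfies $\psi(1)>0$ or is a pure $\min$-$k$ process, so nothing downstream is affected.
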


We remark that it appears boundedness of $z \hat F'(z)$ does not necessarily hold for general $\Psi$. At the very least it does not obviously follow from \eqref{eq:integro} or \eqref{eq:diff}.

%\subsubsection{Tightness}

%\input{mixtures4.tex}

\section{Proving \thref{cor:main}} \label{sec:inequalities}

For this entire section we will let $F$ denote $F^{\Psi}$. To establish \eqref{eq:inequality}, we rely almost entirely on \eqref{eq:integro} and \eqref{eq:diff}. For convenience we rerecord them here:
\begin{align} |z\psi'(F(z))  F'(z) - \psi( F(z))|  \leq (2 - \delta) \psi( F(z) ) \tag{1}, \qquad \delta \in (0,1],
\end{align}
\begin{align}
F'(z)  = z \int_z^\infty \f 1 y d \Psi( F(y)), \tag{2}
\end{align}
\begin{align}
z  F''(z) -  F'(z)  + z \psi( F(z))  F'(z) = 0. \tag{3}
\end{align}
	
We start with the proof of \thref{cor:main}. It follows from a sequence of lemmas.

\begin{proof}[Proof of \thref{cor:main}]
    First off we need the conclusion of \thref{cor:extra} to guarantee \thref{master} \ref{continuous} holds for the interpolations we consider.
         Equidistribution for the $\max$-2 process then follows from \thref{lem:special} by taking $p_2=1$. The fact that the interpolation that is $60$\%-$\min$-2 satisfies \eqref{eq:inequality} follows by taking $p_{-2} = .6$ in \thref{lem:pinch}. Part three (for general interpolations) follows from \thref{lem:pinch2}. 
\end{proof}

 Now we give the proofs of the necessary lemmas. We break this up into two sections: one for interpolations of $\max$-2 and $\min$-2 processes and the other for general interpolations. 
 
\subsection{Interpolations of $\min$-$2$ and $\max$-$2$}

Fix  $p_{-2}, p_2 \in [0,1]$ with $p_2+p_{-2} = 1$. We will work exclusively in this subsection with $\Psi$ that are interpolations of the $\min$-2 and $\max$-2 process. Thus,
\begin{align*}
\Psi(u) &= p_2 u^2 + p_{-2} (1 - (1-u)^2), \\
\psi(u) &= 2 p_2 u + 2 p_{-2} (1- u), \\
\psi'(u) &= 2 p_2 - 2 p_{-2},
\end{align*}
 This is the distribution function (and derivatives) for an interpolation where at each step we add a point according the $\min$-2 process with probability $p_{-2}$ and according to the $\max$-2 process with probability $p_2$.

Our first lemma establishes \eqref{eq:inequality} holds so long as $p_{-2} \leq p_2$. Note that the case $p_2=1$ is the $\max$-2 process.

\begin{lemma}\thlabel{lem:special}
   If $p_{-2} \leq p_2$ then \eqref{eq:inequality} holds.

\end{lemma}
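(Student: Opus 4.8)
The plan is to verify inequality \eqref{eq:inequality} directly for the interpolation between $\min$-$2$ and $\max$-$2$, using only the ODE/integro-differential facts \eqref{eq:integro} and \eqref{eq:diff} together with the very explicit form of $\psi$ and $\psi'$ in this subsection. Since $\psi'(u) = 2p_2 - 2p_{-2}$ is a constant, the expression $z\psi'(F(z))F'(z) - \psi(F(z))$ simplifies dramatically: writing $c = 2p_2 - 2p_{-2} \ge 0$ (nonnegative precisely because $p_{-2}\le p_2$), it becomes $c\,zF'(z) - \psi(F(z))$. So the goal reduces to showing
\begin{align*}
|c\, zF'(z) - \psi(F(z))| \le (2-\delta)\,\psi(F(z))
\end{align*}
for some admissible $\delta\in(0,1]$; taking $\delta=1$ it suffices to show $-\psi(F(z)) \le c\,zF'(z) - \psi(F(z)) \le \psi(F(z))$, i.e. $0 \le c\,zF'(z) \le 2\psi(F(z))$. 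The left inequality is immediate since $c\ge 0$, $z\ge 0$, and $F'\ge 0$ ($F$ a distribution function). So the entire content is the upper bound $c\,zF'(z) \le 2\psi(F(z))$.

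To get the upper bound I would use the differential equation \eqref{eq:diff}: $zF''(z) - F'(z) + z\psi(F(z))F'(z) = 0$, equivalently $F'(z)(1 - z\psi(F(z))) = zF''(z)$. One route: bound $zF'(z)$ in terms of $\psi(F(z))$ by exploiting \eqref{eq:integro}, which gives $F'(z) = z\int_z^\infty \frac1y\, d\Psi(F(y))$; since $\int_z^\infty \frac1y\, d\Psi(F(y)) \le \frac1z \int_z^\infty d\Psi(F(y)) = \frac1z(\,\Psi(1) - \Psi(F(z))\,) = \frac{1-\Psi(F(z))}{z}$, we get $zF'(z) \le 1 - \Psi(F(z))$. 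Then it would suffice to show $c(1-\Psi(F(z))) \le 2\psi(F(z))$, i.e., writing $u = F(z)\in[0,1)$, that $c(1-\Psi(u)) \le 2\psi(u)$ for all $u\in[0,1)$. With $\Psi(u) = p_2 u^2 + p_{-2}(2u - u^2)$, one computes $1 - \Psi(u) = 1 - 2p_{-2}u - (p_2-p_{-2})u^2$ and $\psi(u) = 2p_2 u + 2p_{-2}(1-u)$; plugging in $c = 2(p_2-p_{-2})$ and using $p_2+p_{-2}=1$, this should reduce to an elementary polynomial inequality in $u$ on $[0,1]$ that holds exactly when $p_{-2}\le p_2$. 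I would check the endpoints ($u=0$: $c \le 4p_{-2}$, which since $c = 2p_2-2p_{-2} = 2(1-2p_{-2})$ needs $2-4p_{-2}\le 4p_{-2}$, i.e. $p_{-2}\ge 1/4$ — so the crude bound $zF'(z)\le 1-\Psi(F(z))$ may be too lossy near $z=0$ when $p_{-2}$ is small) and adjust if necessary.

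If the crude bound loses too much, the fallback — and I expect this to be the main obstacle — is to extract a sharper relationship between $zF'(z)$ and $\psi(F(z))$ near $z=0$, where $F(z)\to 0$ and $\psi(F(z))\to\psi(0) = 2p_{-2}$. Here one should use \eqref{eq:diff} more carefully: since $zF''(z) = F'(z)(1 - z\psi(F(z)))$ and $F'(0)$ is finite (indeed from \eqref{eq:integro}, $F'(z)/z \to \int_0^\infty \frac1y d\Psi(F(y))$, a finite constant, so $F'(z) \sim a z$ near $0$ for some $a>0$), we have $zF'(z)\sim a z^2 \to 0$ much faster than $\psi(F(z))\to 2p_{-2}>0$, so the inequality $c\,zF'(z)\le 2\psi(F(z))$ is comfortably satisfied near $0$ and the problem is really only on a region bounded away from $0$, where $1-\Psi(F(z))$ is bounded away from $1$ and the crude bound has more room. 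The argument would be to combine: (i) on $\{z : F(z)\le u_0\}$ use the small-$z$ asymptotics / monotonicity to get $zF'(z)$ small; (ii) on $\{z : F(z) > u_0\}$ use $zF'(z)\le 1-\Psi(F(z)) \le 1-\Psi(u_0)$ and the polynomial inequality, choosing $u_0$ so both pieces close. I would present this as: reduce to $0\le c\,zF'(z)\le 2\psi(F(z))$, handle the trivial lower bound, then split the range of $z$ and in each piece apply the appropriate bound from \eqref{eq:integro} and \eqref{eq:diff}, finishing with the elementary check that $c(1-\Psi(u))\le 2\psi(u)$ on the relevant subinterval of $[0,1]$ when $p_2\ge p_{-2}$.
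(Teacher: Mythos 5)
Your reduction to $0 \le c\,zF'(z) \le 2\psi(F(z))$ with $c = 2(p_2-p_{-2})\ge 0$ is exactly the paper's reduction (the two sides of the absolute value are the paper's two cases, and the lower bound is handled identically). The gap is in your proof of the upper bound. The crude estimate $zF'(z)\le 1-\Psi(F(z))$ from \eqref{eq:integro} needs $c(1-\Psi(u))\le 2\psi(u)$ at $u=0$, i.e.\ $p_{-2}\ge 1/4$, which you notice; but your fallback then leans on $\psi(F(z))\to 2p_{-2}>0$ as $z\to 0$, and this fails precisely in the flagship case of the lemma, the pure $\max$-$2$ process, where $p_{-2}=0$ and $\psi(u)=2u$. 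There the target inequality is $zF'(z)\le 2F(z)$, and since $F'(z)\sim az$ near $0$ (so $F(z)\sim az^2/2$ and $zF'(z)\sim az^2$), both sides vanish at the same order and the inequality is \emph{asymptotically an equality} near $z=0$. There is no margin, so the proposed two-region split cannot close, and even for $0<p_{-2}<1/4$ the region $\{F(z)\le u_0\}$ is handled only by an unquantified appeal to "small-$z$ asymptotics."

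The missing ingredient is to use \eqref{eq:diff} rather than \eqref{eq:integro} for the upper bound. The paper compares derivatives: both sides of $c\,zF'(z)\le 2\psi(F(z))$ agree suitably at $z=0$ (LHS is $0$ there), and since $\psi'\equiv c$ the derivative comparison reduces to $zF''(z)+F'(z)\le 2F'(z)$, i.e.\ $zF''(z)\le F'(z)$, which is immediate from \eqref{eq:diff} because $zF''(z)=F'(z)-z\psi(F(z))F'(z)\le F'(z)$. Equivalently, $F'(z)/z$ is nonincreasing, which is exactly the sharp control near $z=0$ that your integro-differential bound loses. If you replace your upper-bound argument with this derivative comparison, the proof is complete and coincides with the paper's.
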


\begin{proof}
%For convenience let $\Psi = \Psi_\kappa$, $\psi = \psi_\kappa$ and $F = F^{\Psi_\kappa}$. 
%Since $p_{-2} + p_2 = 1$ it is easily checked that $\psi'(u) = 2 ( p_2 - p_{-2})$.
%can write 
%\begin{align*}
%%\Psi(u) &= p_{-2} ( 1- (1-u)^2) + p_1 u + p_2 u^2, \\
%\psi(u) &= 2p_{-2} + p_1 + 2(p_2 -p_{-2}) u, \\
% \psi'(u) &= 2(p_2 - p_{-2} ).
%\end{align*}
Dropping the constant $2-\delta$ from the right side of \eqref{eq:inequality} it suffices to prove that
\begin{align}
| \psi(F(z)) - z\psi'(F(z)) F'(z)  |  \leq \psi(F(z)). \label{eq:sufficient2}
\end{align}
We break into two cases:
\begin{itemize}
    
    \item First suppose $\psi(F(z)) \geq z \psi'(F(z))F'(z)$ so that \eqref{eq:sufficient2} reduces to proving that $$- z \psi'(F(z)) F'(z) \leq 0.$$ As $F$ is increasing we know $F'(z) \geq 0$. The hypothesis $p_{-2} \leq p_2$ guarantees that $\psi'(F(z)) \geq 0$. Thus, the inequality is satisfied.
    \item Next, suppose $ \psi( F(z) ) \leq z\psi'(F(z)) F'(z)$. Rearranging \eqref{eq:sufficient2} %and writing $\psi'(F(z)) = 2(p_2 - p_{-2})$
     we seek to show
    $$2(p_2 - p_{-2})  zF'(z) \leq 2 \psi( F(z) ).$$
     Note that both sides are zero at $z=0$. By the fundamental theorem of calculus it then suffices to prove the above inequality holds for the derivatives.
    %  $$\tf{d}{dz} 2(p_2 - p_{-2}) zF'(z) \leq \tf{d}{dz} 2\psi( F(z)), \qquad  \forall z \geq 0.$$
       Differentiating and again using the fact that $\psi'(F(z)) = 2 (p_2 - p_{-2})$ reduces the problem to establishing
     \begin{align*}2(p_2 - p_{-2}) ( z  F''(z) + F'(z) ) \leq 4(p_2 - p_{-2}) F'(z).\end{align*}
      After some algebra this is equivalent to
      \begin{align}
          z F''(z) \leq F'(z). \label{eq:reduction2}
      \end{align}
     From \eqref{eq:diff} we know that $z F''(z) = F'(z)-  z \psi( F(z) )  F'(z)$. Substitute this 
 into \eqref{eq:reduction2} and we have a sufficient condition 
 %for \eqref{eq:sufficient2} 
 is that
     $$F'(z)- 2 z \psi(F(z)) F'(z) \leq  F'(z).$$
    This holds as $F'(z)$ and $\psi( F(z))$ are nonnegative.
\end{itemize}
\end{proof}

To prove \eqref{eq:inequality} holds when $p_{-2} > p_2$ requires a different analysis of the differential equation at \eqref{eq:diff}. 
\noindent \thref{lem:zF'} shows $zF'(z)$ can be bounded in terms of $p_{2}$. 
%We use both \eqref{eq:diff} and 
%the integro-differential characterization of $F'$ at \eqref{eq:integro}.
%\begin{align}
%F'(z) = z \int_z^\infty \f{ 1}{y} d\Psi(F(y)). \label{eq:integro}
%\end{align} 
%\thref{lem:pinch} makes use of this bound to deduce \eqref{eq:inequality2} holds so long as $p_{-2} \leq .6 - .5p_1$.

\begin{lemma} \thlabel{lem:F'1}
If $p_{-2} > p_2$ then 
$\lim_{\epsilon \to 0 } {F'(\epsilon)}/{\epsilon} \leq 2.$
\end{lemma}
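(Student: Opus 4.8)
The quantity $\lim_{\epsilon \to 0} F'(\epsilon)/\epsilon$ should be read as $F''(0^+)$ (or the appropriate one-sided limit), since $F(0) = F'(0) = 0$ for these processes. The plan is to extract this value from the differential equation \eqref{eq:diff} together with the integral representation \eqref{eq:integro}, evaluated in the limit $z \to 0$. First I would rewrite \eqref{eq:diff} as
\begin{align*}
\frac{F''(z)}{1} = \frac{F'(z)}{z} - \psi(F(z)) F'(z) = \frac{F'(z)}{z}\bigl(1 - z\psi(F(z))\bigr),
\end{align*}
and observe that as $z \to 0$ we have $F(z) \to 0$, hence $\psi(F(z)) \to \psi(0) = 2p_{-2}$, and $z\psi(F(z)) \to 0$. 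So the behaviour of $F''(z)$ near $0$ is governed entirely by $F'(z)/z$. Setting $L = \lim_{z\to 0} F'(z)/z$ (which exists and is finite — this needs to be justified, see below), L'Hôpital or a direct argument gives $\lim_{z\to 0} F''(z) = L \cdot (1 - 0) = L$ as well. This is consistent but not yet a bound; I need a second equation.

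The second equation comes from \eqref{eq:integro}: dividing by $z$ gives $F'(z)/z = \int_z^\infty \frac{1}{y}\, d\Psi(F(y))$, so $L = \int_0^\infty \frac{1}{y}\, d\Psi(F(y))$ (assuming the integral converges, which again needs the $p_{-2} > p_2$ hypothesis or the general bounds of \cite{elliot} on $F'$ near infinity to control the tail, and near $0$ the integrand behaves like $\psi(F(y)) F'(y)/y \approx 2p_{-2} L$, which is integrable near $0$). Now differentiate the differential equation \eqref{eq:diff} once more, or better: expand $F(z) = \tfrac12 L z^2 + o(z^2)$ near $0$ and plug into \eqref{eq:diff} to get a relation. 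Actually the cleanest route: from \eqref{eq:diff}, $zF''(z) - F'(z) = -z\psi(F(z))F'(z)$. Divide by $z^2$ and integrate, or simply note that writing $F'(z) = Lz + o(z)$ and $F''(z) = L + o(1)$ makes \eqref{eq:diff} read $zL - Lz + z\cdot 2p_{-2}\cdot Lz + o(z^2) = 0$, i.e. $2p_{-2}Lz^2 + o(z^2) = 0$ — wait, that forces $L = 0$, which is wrong. So the expansion must carry a correction term: $F'(z) = Lz + cz^2 + o(z^2)$, giving $zF''(z) = Lz + 2cz^2$, $F'(z) = Lz + cz^2$, so $zF''(z) - F'(z) = cz^2$, and this must equal $-z\psi(F(z))F'(z) = -z \cdot 2p_{-2} \cdot Lz + o(z^2) = -2p_{-2}Lz^2 + o(z^2)$, yielding $c = -2p_{-2}L$. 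This is a consistency relation among the Taylor coefficients but still does not pin down $L$ numerically.

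To actually get the bound $L \le 2$, I expect the real input is a global/monotonicity argument rather than a purely local one: one studies the function $G(z) = zF'(z)$ and shows, using \eqref{eq:diff} rewritten as $G'(z) = zF''(z) + F'(z) = 2F'(z) - z\psi(F(z))F'(z) = F'(z)(2 - z\psi(F(z)))$, that $G$ has a controlled behaviour. Combined with $G(z)/z = F'(z) \to L$ as $z \to 0$ and $G(z) \to 0$ as $z \to \infty$ (from the tail decay of $F'$ in \cite[Proposition 8.2 / 8.4]{elliot}), and using that $\psi(F(z)) \ge 2p_2 F(z) \ge 0$ but more usefully $\psi(0) = 2p_{-2}$, one extracts that $L = \int_0^\infty \frac{1}{y} d\Psi(F(y))$ and bounds this integral. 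I would split $\int_0^\infty \frac1y d\Psi(F(y)) = \int_0^\infty \frac{\psi(F(y))F'(y)}{y} dy$ and use $\psi(F(y)) = 2p_2F(y) + 2p_{-2}(1-F(y)) \le 2p_{-2}$ (valid since $p_{-2} > p_2$ makes $\psi$ decreasing, so its max is $\psi(0) = 2p_{-2}$) together with $\int_0^\infty \frac{F'(y)}{y}\bigl(\cdots\bigr) dy$ reorganised via \eqref{eq:integro} itself — i.e. the relation is self-referential and one solves for $L$. Here is where I expect the hypothesis $p_{-2} > p_2$ to be essential and where the constant $2$ drops out (plausibly as $\psi(0)/p_{-2} = 2$, or via $2p_{-2} \le 2$ combined with $p_2 + p_{-2} = 1$).

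\textbf{Main obstacle.} The hard part is not the local Taylor expansion (routine) but rigorously justifying that $L := \lim_{\epsilon \to 0} F'(\epsilon)/\epsilon$ exists and is finite, and then converting the self-referential identity $L = \int_0^\infty y^{-1} d\Psi(F(y))$ into the clean inequality $L \le 2$ — this likely requires a clever use of the specific form $\psi(u) = 2p_2 u + 2p_{-2}(1-u)$ together with a global bound on $\int_0^\infty F'(y)\,y^{-1}\,(1 - F(y))\,dy$ or similar, possibly via an integration-by-parts against \eqref{eq:diff}. I would expect the proof to integrate \eqref{eq:diff} divided by an appropriate power of $z$ over $(0,\infty)$, picking up boundary terms at $0$ (which give $L$) and at $\infty$ (which vanish), and thereby isolate $L$ in a way that makes the bound $L \le 2$ transparent from $\psi \le 2p_{-2} \le 2$.
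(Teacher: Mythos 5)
You have the right starting point: dividing \eqref{eq:integro} by $z$ gives $F'(\epsilon)/\epsilon = \int_\epsilon^\infty y^{-1}\,d\Psi(F(y))$, so the limit exists (in $[0,\infty]$, by monotone convergence, since the integrand is nonnegative) and equals $L=\int_0^\infty y^{-1}\,d\Psi(F(y))$. The lengthy local analysis in your second paragraph — Taylor coefficients, L'H\^opital, the ODE for $zF'(z)$ — is a detour that, as you observe yourself, cannot determine $L$. You also have the correct pointwise input: since $p_{-2}>p_2$ the density $\psi$ is decreasing, so $\psi(F(y))\le\psi(0)=2p_{-2}\le 2$, equivalently $d\Psi(F(y))\le 2\,dF(y)$, or in integrated form $\Psi(F(y))=F(y)[(p_2-p_{-2})F(y)+2p_{-2}]\le 2F(y)$. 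This is exactly the bound the paper uses.

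The genuine gap is the closing ingredient, which you explicitly flag as unknown. There is no ``self-referential equation to solve for $L$'' and no global argument against \eqref{eq:diff}. The missing fact is the normalization $\xnorm{F}=\int_0^\infty y^{-2}F(y)\,dy=1$, which is \thref{master}~\ref{rate} (i.e.\ \cite[Lemma 3.5]{elliot}): $\hat F$ carries total mass $1$ in the $x^{-2}$ norm, being the limit of $A_t$ with $\xnorm{A_t}=e^{-t}N_t\to 1$. With it the proof is two lines: integrate by parts, $L=\int_0^\infty y^{-2}\Psi(F(y))\,dy$ (the boundary terms vanish because $\Psi(F(y))/y\to0$ at both $0$ and $\infty$), then $\Psi(F(y))\le 2F(y)$ gives $L\le 2\xnorm{F}=2$. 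Equivalently, in your formulation, $L\le 2p_{-2}\int_0^\infty y^{-1}\,dF(y)=2p_{-2}\,\xnorm{F}\le 2$ by the same integration by parts. Without identifying $\xnorm{F}=1$ the argument does not close, and your proposed substitutes (a bound on $\int_0^\infty F'(y)y^{-1}(1-F(y))\,dy$, or integrating \eqref{eq:diff} against powers of $z$) do not obviously produce the constant $2$.
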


\begin{proof}
Starting from the formula at \eqref{eq:integro} then integrating by parts gives
\begin{align}
\lim_{\epsilon \to 0} \f{F'(\epsilon)}{\epsilon} &= \int_0^\infty \f{ 1}{y} d\Psi(F(y))
%&=\int_0^\infty y^{-2} \Psi(F(y)) dy 
=\xnorm{\Psi \circ F}. \label{eq:F'1}
\end{align}
Plugging into $\Psi$ we have
\begin{align}
\Psi(F(y)) &= p_2 F(y)^2 + p_{-2}(1 - (1-F(y))^2)\\
& =F(y)[ (p_2- p_{-2}) F(y) + 2p_{-2} ] \nonumber.
\end{align}
The hypothesis $p_2 < p_{-2}$ means an upper bound for the above is
\begin{align}
\Psi(F(y)) &\leq 2 p_{-2} F(y) \leq 2 F(y). \label{eq:F'11}  
\end{align}
\thref{master} \ref{rate} implies that $\xnorm{F} = 1$. It follows from \eqref{eq:F'1} and \eqref{eq:F'11} that 
\begin{align*}
\lim_{\epsilon\to 0} \f { F'(\epsilon)}{\epsilon} \leq 2\xnorm{F} = 2.
\end{align*}

\end{proof}

\begin{lemma} \thlabel{lem:zF'}
It $p_{-2} > p_2$ then 
$$z F'(z) \leq  2( p_2 e)^{-2 }.$$
\end{lemma}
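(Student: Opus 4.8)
The goal is an absolute bound on $zF'(z)$ using the differential equation \eqref{eq:diff} together with the initial-slope estimate from \thref{lem:F'1}. The plan is to study the function $g(z) = zF'(z)$ directly. From \eqref{eq:diff} we have $zF''(z) = F'(z) - z\psi(F(z))F'(z)$, so
\begin{align}
g'(z) = F'(z) + zF''(z) = 2F'(z) - z\psi(F(z))F'(z) = \frac{2g(z)}{z} - \psi(F(z))g(z). \nonumber
\end{align}
This is a linear first-order ODE for $g$, but the factor $2/z$ is singular at the origin, so I would instead consider $h(z) = g(z)/z = F'(z)$ or, more cleanly, the quantity $F'(z)/z$ whose limit at $0$ is controlled by \thref{lem:F'1}. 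Rearranging, $g'(z) = g(z)\bigl(\tfrac{2}{z} - \psi(F(z))\bigr)$, which I would rewrite as $\tfrac{d}{dz}\log\bigl(g(z)/z^2\bigr) = -\psi(F(z)) \le 0$. Hence $g(z)/z^2 = F'(z)/z$ is non-increasing in $z$, so $F'(z)/z \le \lim_{\epsilon \to 0} F'(\epsilon)/\epsilon \le 2$ by \thref{lem:F'1}. That already gives $zF'(z) \le 2z^2$, which is useless for large $z$; the point of the factor $(p_2 e)^{-2}$ must be that one combines the monotonicity of $F'(z)/z$ with decay of $F'$ coming from the $\psi(F(z))$ term — specifically, integrating $\tfrac{d}{dz}\log(F'(z)/z) = -\psi(F(z))$ from a reference point.

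Here is the refined route. Since $F'(z)/z$ is non-increasing and bounded by $2$, write $F'(z)/z = 2\exp\bigl(-\int_0^z \psi(F(y))\,dy\bigr)$ after checking the boundary term (the constant is exactly $\lim_{\epsilon\to0}F'(\epsilon)/\epsilon = 2$ from \thref{lem:F'1} — here I'd use the inequality, so really $F'(z)/z \le 2\exp(-\int_0^z\psi(F(y))\,dy)$). Now $zF'(z) = z^2 \cdot (F'(z)/z) \le 2 z^2 \exp\bigl(-\int_0^z\psi(F(y))\,dy\bigr)$, and I need a lower bound on $\int_0^z \psi(F(y))\,dy$. In the interpolation regime $\psi(u) = 2p_2 u + 2p_{-2}(1-u) \ge 2p_2 u$ for $u\in[0,1]$ (since $p_{-2}(1-u)\ge 0$), so $\psi(F(y)) \ge 2p_2 F(y)$. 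To turn this into something usable I'd want a lower bound on $F(y)$ itself; a crude one: since $F'(z)/z$ is non-increasing, $F(z) = \int_0^z F'(y)\,dy$ and $F'(y) = y\cdot(F'(y)/y) \ge y (F'(z)/z)$ for $y \le z$, so $F(z) \ge \tfrac{z^2}{2}(F'(z)/z) = \tfrac{z}{2}F'(z)\cdot\tfrac{1}{1}$... this is circular but actually useful: set $\phi(z) = zF'(z)$; we get $F(z) \ge \tfrac12 z \cdot \tfrac{\phi(z)}{z}\cdot z$... let me instead just use $\int_0^z \psi(F(y))\,dy \ge 2p_2\int_0^z F(y)\,dy$ and then a convexity/monotonicity bound $\int_0^z F(y)\,dy \ge$ something in terms of $z$ and $F'(z)$, ultimately yielding $zF'(z) \le 2z^2 e^{-c p_2 z^2}$ or similar, whose maximum over $z$ is $O((p_2 e)^{-1})$ — and squaring-type losses in the crude bounds give the stated $2(p_2 e)^{-2}$.

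\textbf{Main obstacle.} The delicate point is getting the constant to come out exactly as $2(p_2 e)^{-2}$: this requires choosing the intermediate inequalities (the lower bound for $\psi$, the lower bound for $\int_0^z F$, and the optimization $\max_z z^2 e^{-a z^2} = (ae)^{-1}$) so that the losses multiply correctly. The structure $\max_{t\ge0} t\, e^{-at} = (ae)^{-1}$ with $a$ proportional to $p_2$ strongly suggests the final step is an elementary one-variable maximization after one has reduced to $zF'(z) \le C\, z^2 \exp(-a p_2 z^2)$ (or a change of variable $t = z^2$), and the factor-of-$2$ and the square on $p_2 e$ track exactly which crude bound ($\psi(F)\ge 2p_2 F$ versus a sharper one, and whether $F(y)\ge$ linear or quadratic in $y$) is affordable. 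I expect the real work is in the bookkeeping of these constants rather than in any conceptual difficulty; the conceptual content is entirely the observation that $\tfrac{d}{dz}\log(F'(z)/z) = -\psi(F(z))$, which is immediate from \eqref{eq:diff}.
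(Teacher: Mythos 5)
Your first half is exactly the paper's argument: integrating \eqref{eq:diff} (equivalently your identity $\tfrac{d}{dz}\log(F'(z)/z)=-\psi(F(z))$) and invoking \thref{lem:F'1} to get $zF'(z)\le 2z^2\exp\bigl(-\int_0^z\psi(F(y))\,dy\bigr)$ is correct and is precisely how the proof begins. The gap is in the second half: you lower-bound $\psi(F(y))=2p_2F(y)+2p_{-2}(1-F(y))$ by discarding the term $2p_{-2}(1-F(y))$ and keeping $2p_2F(y)$. That is the wrong term to drop — it vanishes as $y\to0$, which is why you are forced into a (circular, unfinished) lower bound on $F$ itself and end up with an exponent quadratic in $z$, whose maximization gives order $(p_2e)^{-1}$ rather than the claimed $(p_2e)^{-2}$. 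Your closing remark that "squaring-type losses give the stated constant" is not a proof, and the route you sketch does not obviously produce one.

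The missing observation is elementary: under the hypothesis $p_{-2}>p_2$ the linear function $\psi(u)=2p_2u+2p_{-2}(1-u)$ is \emph{decreasing} in $u$, so its minimum on $[0,1]$ is attained at $u=1$, giving the uniform constant bound $\psi(F(y))\ge\psi(1)=2p_2$ for all $y$. Hence $\int_0^z\psi(F(y))\,dy\ge 2p_2z$ and $zF'(z)\le 2z^2e^{-2p_2z}$; maximizing $z^2e^{-2p_2z}$ at $z=1/p_2$ yields exactly $2(p_2e)^{-2}$. So the constant-tracking you flagged as the "main obstacle" dissolves once you keep the $p_{-2}(1-u)$ term instead of the $p_2u$ term.
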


\begin{proof}

%We first prove that $F'(z) \leq 1$ for all $z\geq 0$. This follows from  a simple bound on \eqref{eq:integro}:
%\begin{align*}
%F'(z) = z \int_z^\infty \f{ \psi(F(y)) }{y}F'(y) dy 
%&\leq  z \cdot \f 1 z \int_z^\infty \psi(F(y))F'(y) dy \\
%&= \Psi(1) - \Psi(F(z))\\
%& \leq 1. 
%\end{align*}
%Since $\Psi(1) =1$ we conclude that $F'(z) \leq 1.$ 
Integrate \eqref{eq:diff} as in \cite[Proposition 8.1]{elliot} so that for any $\epsilon >0$
\begin{align*}F'(z) = \f{F'(\epsilon)}{\epsilon}z \exp\left( - \int_\epsilon^z \psi(F(y)) dy \right).\end{align*}
Taking $\epsilon \to 0$ and applying \thref{lem:F'1} gives
\begin{align}F'(z) \leq 2z \exp\left( - \int_0^z \psi(F(y)) dy \right).\label{eq:F'}\end{align}
We observe that $\psi(F(y)) = 2p_{2}F(y) + 2p_{-2}(1 - F(y)).$ Since we are assuming $p_{-2} > p_2$ and know that $F(y) \leq 1$ we obtain a lower bound by evaluating at $\psi(1)$:
\begin{align}\psi(F(y)) \geq \psi(1) = 2 p_2. \label{eq:psi}
\end{align}
 Applying this to \eqref{eq:F'} and multiplying by $z$ gives
\begin{align*}
zF'(z) &\leq 2 z^2 e^{- 2p_2 z}.
\end{align*}
The maximum of $z^2 e^{-2p_2 z}$ is at $z= 1/p_2$. Plug this in above to obtain the claimed bound.
%\begin{align*}
%zF'(z) \leq e^{p_1} D_\kappa (2/p_1)^2 e^{-2}. %\label{eq:better}
%\end{align*}
%The claimed bound is weaker than that in \eqref{eq:better}. We prefer it for its simplicity.  It follows from the fact that $p_1 \leq 1$ and so $e^{p_1 - 2} < \f 12$. 
\end{proof}

\begin{lemma} \thlabel{lem:pinch}
If $p_2 < p_{-2} \leq .6$ then \eqref{eq:inequality} holds.
%so long as $(2 D_\kappa^2)^{1/3} \leq p_1$.  
%       \begin{align}\tf 23| z\psi_\kappa'(F^{\Psi_\kappa}(z))(F^{\Psi_\kappa})'(z)  - \psi_\kappa( F^{\Psi_\kappa}(z) )| - \psi_{\kappa}(F^{\Psi_\kappa}(z)) \leq 0 \label{eq:hyp2} \end{align}
\end{lemma}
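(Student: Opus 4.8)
The plan is to establish the inequality \eqref{eq:sufficient2} (equivalently \eqref{eq:inequality} with the constant $2-\delta$ weakened to $1$, which suffices), splitting as in \thref{lem:special} into the two sign cases for $\psi(F(z)) - z\psi'(F(z))F'(z)$. In the regime $p_2 < p_{-2}$ we have $\psi'(F(z)) = 2(p_2 - p_{-2}) < 0$, so the case $\psi(F(z)) \geq z\psi'(F(z))F'(z)$ is now the nontrivial one: we must show $-z\psi'(F(z))F'(z) = 2(p_{-2}-p_2)\,zF'(z) \leq 2\psi(F(z))$. The other case, $\psi(F(z)) \leq z\psi'(F(z))F'(z)$, forces $z\psi'(F(z))F'(z) \geq 0$, but since $\psi'(F(z)) < 0$ and $F'(z) \geq 0$ this can only happen when $F'(z) = 0$, where \eqref{eq:sufficient2} is trivial — so only the first case needs real work.

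So the heart of the proof is the bound $(p_{-2}-p_2)\,zF'(z) \leq \psi(F(z))$. First I would invoke \thref{lem:zF'}, which gives $zF'(z) \leq 2(p_2 e)^{-2}$. Next I would lower-bound the right-hand side: since $\psi(F(z)) = 2p_2 F(z) + 2p_{-2}(1-F(z)) \geq 2p_2$ (as in \eqref{eq:psi}, using $p_{-2} > p_2$ and $F(z) \leq 1$), it suffices to show
\begin{align}
(p_{-2} - p_2) \cdot \frac{2}{(p_2 e)^2} \leq 2 p_2,\nonumber
\end{align}
i.e. $p_{-2} - p_2 \leq p_2^3 e^2$. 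With $p_{-2} = 1 - p_2$ this becomes $1 - 2p_2 \leq p_2^3 e^2$. The hypothesis $p_{-2} \leq .6$ means $p_2 \geq .4$, and one checks that $1 - 2(.4) = .2 \leq (.4)^3 e^2 = .064 \cdot e^2 \approx .473$; moreover $p_2 \mapsto p_2^3 e^2 + 2p_2 - 1$ is increasing on $[.4, .5]$ (its derivative $3p_2^2 e^2 + 2$ is positive), so the inequality holds for all $p_2 \in [.4, .5)$, which is exactly the range $p_2 < p_{-2} \leq .6$. This is just a numerical verification, so I will present it briefly.

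The main obstacle — already dispatched by the preceding lemmas \thref{lem:F'1} and \thref{lem:zF'} — is getting a good enough pointwise bound on $zF'(z)$; everything here is only an integro-differential consequence of \eqref{eq:integro} and \eqref{eq:diff}, and the $e^2$ gain from maximizing $z^2 e^{-2p_2 z}$ is exactly what makes the arithmetic close. I expect the write-up to be short: reduce to \eqref{eq:sufficient2}, dismiss the second sign case, chain $zF'(z) \leq 2(p_2 e)^{-2}$ with $\psi(F(z)) \geq 2p_2$, and finish with the one-line check that $1 - 2p_2 \leq p_2^3 e^2$ on $[.4, .5)$.
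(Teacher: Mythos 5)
Your overall strategy is the paper's --- chain the bound $zF'(z)\leq 2(p_2e)^{-2}$ from \thref{lem:zF'} with the lower bound $\psi(F(z))\geq 2p_2$ from \eqref{eq:psi} and finish by arithmetic --- but the reduction you set up at the start contains a genuine error, and the inequality you actually verify is too weak to yield \eqref{eq:inequality}. Note first that \eqref{eq:sufficient2} is \emph{false} in the regime $p_2<p_{-2}$: there $\psi'(F(z))=2(p_2-p_{-2})<0$, so $z\psi'(F(z))F'(z)\leq 0\leq\psi(F(z))$ and hence
\[
|\psi(F(z))-z\psi'(F(z))F'(z)|=\psi(F(z))+|z\psi'(F(z))F'(z)|>\psi(F(z))
\]
whenever $zF'(z)>0$. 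So the plan of proving the constant-$1$ inequality cannot succeed. Concretely, in the case $\psi(F(z))\geq z\psi'(F(z))F'(z)$ the constant-$1$ inequality demands $-z\psi'(F(z))F'(z)\leq 0$, not $\leq 2\psi(F(z))$; you have transplanted the target from the \emph{other} case of \thref{lem:special}. What you end up verifying, $2(p_{-2}-p_2)zF'(z)\leq 2\psi(F(z))$, establishes only $|z\psi'(F(z))F'(z)-\psi(F(z))|\leq 3\psi(F(z))$, i.e.\ \eqref{eq:inequality} with $2-\delta=3$, which lies outside the admissible range $\delta\in(0,1]$ and does not feed into \thref{prop:cond1}.

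The correct target --- the one the paper reaches via the triangle inequality --- is $|z\psi'(F(z))F'(z)|\leq(1-\delta)\psi(F(z))$, i.e.\ $zF'(z)\leq(1-\delta)\psi(F(z))/|\psi'(F(z))|$, which is a factor of $2/(1-\delta)$ more demanding than yours. The arithmetic you must check is therefore $2e^{-2}(1-2p_2)\leq(1-\delta)p_2^3$, not $1-2p_2\leq e^2p_2^3$. The numbers still close, but with far less room than your computation suggests: at $p_2=.4$ one needs $0.054\ldots\leq 0.064\,(1-\delta)$, which forces $\delta\lesssim .15$, and the boundary of the feasible region is the root $p_{-2}\approx .61$ of the cubic $\tfrac{2}{e^2}(2p_{-2}-1)=(1-p_{-2})^3$ --- which is exactly why the lemma stops at $p_{-2}\leq .6$. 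With that corrected reduction your argument coincides with the paper's.
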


\begin{proof}
%Let $\Psi = \Psi_\kappa$, $\psi = \psi_\kappa$ and $F = F^{\Psi_\kappa}$. 
Using the triangle inequality on the left side of \eqref{eq:inequality} it suffices to find $\delta$ such that for all $z \geq 0$
%$$\tf 23 (z \psi'(F(z)) F'(z) + \psi(F(z))) - \psi(F(z)) = 
\begin{align}
| z \psi'(F(z)) F'(z)| + |\psi(F(z)) | \leq (2-\delta)\psi(F(z)) . \label{eq:first}
\end{align}
Because $F$ is a distribution function, we know that $F'\geq 0$. Also, note that $$(2-\delta)\psi(F(z)) - |\psi(F(z))| \leq (1- \delta) \psi(F(z)).$$ Thus, to establish \eqref{eq:first} it is enough to prove
\begin{align} z F'(z) \leq \f {(1- \delta)\psi(F(z) )} {|\psi'(F(z))|}, \qquad  \text{for $z \geq 0$}. \nonumber %\label{eq:stronger}
 \end{align}
We have from \eqref{eq:psi} that $\psi(u) \geq  2p_2$ and can compute $|\psi'(u)| = 2 |p_2 - p_{-2}|$. 
It then suffices to prove
\begin{align*}
z F'(z) \leq \f{ p_2(1- \delta)}{|p_2- p_{-2}|}.
\end{align*}
%\begin{align} z F'(z) \leq \f {p_{-2} + p_1 - 2 \epsilon} {4\epsilon} \label{eq:pointer}
%\end{align}
By \thref{lem:zF'} it suffices to choose $\delta$, $p_{-2}$ and $p_{2}$ so that
$$ 2\left( p_2 e \right)^{-2} \leq \f{ p_2(1- \delta)}{|p_2- p_{-2}|}.$$
Combining with our hypotheses we have the following system of constraints
\begin{align}
2e^{-2}|p_2 - p_{-2}| &\leq (1-\delta)(p_2)^3, \nonumber\\
p_2 + p_{-2} &= 1 ,\nonumber\\
p_2 &< p_{-2},\nonumber \\
0 &< \delta \leq 1. \nonumber
\end{align}
Take $\delta \to 0$ and use the fact that $p_{-2}$ is assumed to be larger than $p_{2}$, and the solution must be strictly smaller than the real root of the cubic
$$\f 2 { e^2} (p_{-2} - (1- p_{-2})) = (1 - p_{-2})^3.$$
This is approximately $.61$, thus $p_{-2} \leq .6$ lies in the solution set.
%\noindent Solve for $D_\kappa$ and the above holds so long as $(\f{8}{e^2} C_{\mathbf{p}} D_\kappa)^{1/3} \leq p_1$.
%%$(\f{\sqrt {8}} e D_\kappa)^{2/3} \leq  p_1.$ 
%%Make the substitution $p_1 = 1- \sum_{k \geq 2 } p_k$ and rearrange to see the inequality is true 
%This holds for precisely $\kappa \in \mathcal Q'$.
\end{proof}

\begin{remark}
The bound $p_{-2} \leq .6$ could be optimized further in the preceding lemmas, but the gain would be marginal. Something like $p_{-2} \leq .68$ is the best that comes out of optimizing our argument. We sacrifice this marginal gain for the sake of clarity. 
\end{remark}

%\begin{remark} \thlabel{remark}
%    It is possible to extend to a slightly larger set of solutions.
%     The constant $\f 23$ in \eqref{eq:inequality2} can be reduced to $1/ (2- \delta)$ if we instead work in the norm $\| \cdot \|_{x^{ -1 - \delta}}$. For simplicity we have held $\delta= \f 12$ fixed. If we were to let $\delta$ tend to zero then the constant 32 in \eqref{eq:improve} would be halved to 16. This new system extends to all mixtures with $p_{-2} \leq .6 - .5 p_1$.
%       \end{remark}

%%%%%%%%%%%%%%%%%%%%%%%%%%%%%%%%%%%%%%%%%%%%%%%%%%%%%%%%%%%%%%%%%%%%%%%%%
%%%%%%%%%%%%%%%%%%%%%%%%%%%%%%%%%%%%%%%%%%%%%%%%%%%%%%%%%%%%%%%%%%%%%%%%%
%%%%%%%%%%%%%%%%%%%%%%%%%%%%%%%%%%%%%%%%%%%%%%%%%%%%%%%%%%%%%%%%%%%%%%%%%
%%%%%%%%%%%%%%%%%%%%%%%%%%%%%%%%%%%%%%%%%%%%%%%%%%%%%%%%%%%%%%%%%%%%%%%%%

\subsection{General interpolations of $\max$-$k$, uniform and $\min$-$k$ processes}

We will reprove versions of the previous three lemmas for more general interpolations. Let $\mathbf p =(p_k)_{k \neq -1,0}$ be a probability measure on $\mathbf Z \setminus \{-1,0\}$. In this subsection we consider the interpolations
\begin{align*}
\Psi(u) &= p_1 u + \tsum_{k \geq 2} p_k u^k + p_{-k} (1 - (1-u)^k).
\end{align*}
Define $C_{\mathbf{p}} = \sum_{k \geq 2} k (k-1)(p_k + p_{-k}).$ This constant arises because $\sup_{u \geq 0} |\psi'(u)| \leq C_{\mathbf{p}}.$ First we give a bound on $F'$ that holds for any $\Psi$-process.

\begin{lemma} \thlabel{lem:F'2}
Let $\Psi$ satisfy $(C)$ and $(D)$. For all $z \geq 0$ it holds that ${F'(z)} \leq 1.$
\end{lemma}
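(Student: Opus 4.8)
The plan is to extract the bound directly from the integro-differential equation \eqref{eq:integro}, using only that $\Psi \circ F$ is a distribution-function-like object bounded by $1$ and the normalization $\xnorm{F} = 1$ from \thref{master} \ref{rate}. Writing \eqref{eq:integro} as
\[
F'(z) = z\int_z^\infty \frac{1}{y}\, d\Psi(F(y)),
\]
I would first integrate by parts (exactly as in the proof of \thref{lem:F'1}, where the $z=0$ case gave $\lim_{\epsilon\to 0}F'(\epsilon)/\epsilon = \xnorm{\Psi\circ F}$) to rewrite the tail integral. The key structural fact is that $\Psi\circ F$ is itself an increasing function from $[0,\infty)$ to $[0,1]$, so $\Psi(F(y)) \le 1$ and $\Psi(F(y)) \le F(y)$ are both available as pointwise bounds.

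The cleanest route: from \eqref{eq:integro}, since $\tfrac1y \le \tfrac1z$ on $[z,\infty)$,
\[
F'(z) = z\int_z^\infty \frac1y\, d\Psi(F(y)) \le z \cdot \frac1z \int_z^\infty d\Psi(F(y)) = \Psi(F(\infty)) - \Psi(F(z)) \le 1,
\]
because $\Psi$ is a distribution function and $F(\infty) \le 1$ (indeed $F$ is a distribution function for the full process). This is essentially immediate; the only care needed is justifying that $F(\infty)$ makes sense and that the Riemann--Stieltjes manipulation is valid, which follows from $F$ being continuously differentiable and increasing with range in $[0,1]$ (established in \cite{elliot}, and the hypotheses $(C)$ and $(D)$ guarantee $F^\Psi$ exists with these properties). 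Alternatively, if one wants the sharper-looking argument, bound $\int_z^\infty \tfrac1y d\Psi(F(y)) \le \int_0^\infty \tfrac1y d\Psi(F(y))$ and relate this to $\xnorm{F} = 1$ via integration by parts, but the one-line inequality above already suffices.

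I do not expect any real obstacle here — this is a warm-up bound before the more delicate estimates in the general-interpolation lemmas. The only thing to be slightly careful about is the endpoint behavior at $z = 0$ and at $\infty$: at $z=0$ the factor $z$ kills any blow-up of the integral, and at $\infty$ one uses $F'(z) \to 0$. So the proof is: invoke \eqref{eq:integro}, apply $\tfrac1y \le \tfrac1z$ for $y \ge z$, pull out the constant, and use that $\Psi$ maps into $[0,1]$ to conclude $F'(z) \le \Psi(F(\infty)) \le 1$.
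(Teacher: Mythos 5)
Your ``cleanest route'' is exactly the paper's proof: bound $\tfrac1y\le\tfrac1z$ on $[z,\infty)$ in \eqref{eq:integro} so that $F'(z)\le \Psi(1)-\Psi(F(z))\le 1$. The extra detours through $\xnorm{\Psi\circ F}$ and integration by parts are unnecessary but harmless.
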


\begin{proof}
%We first prove that $F'(z) \leq 1$ for all $z\geq 0$. 
This follows from  a simple bound on \eqref{eq:integro}:
\begin{align*}
F'(z) = z \int_z^\infty \f{ \psi(F(y)) }{y}F'(y) dy 
&\leq  z \cdot \f 1 z \int_z^\infty \psi(F(y))F'(y) dy \\
&= \Psi(1) - \Psi(F(z)).
%& \leq 1. 
\end{align*}
Since $\Psi(1) =1$ we conclude that $F'(z) \leq 1.$ 
 
\end{proof}

Now let us return to the setting where $\Psi$ is an interpolation of $\max$-$k$, uniform and $\min$-$k$ processes given by $\mathbf p$.

\begin{lemma} \thlabel{lem:zF'2}
Suppose that $p_{1} > 0$. It holds that 
$$z F'(z) \leq \f{2e^{-1}}{(p_1)^2} .$$
\end{lemma}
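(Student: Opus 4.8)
The plan is to mimic the argument of Lemma \thref{lem:zF'}. First I would integrate the differential equation \eqref{eq:diff}, exactly as in \cite[Proposition 8.1]{elliot}: writing $zF''(z) = F'(z) - z\psi(F(z))F'(z)$ and solving the resulting first-order linear ODE for $F'$ gives, for any $\epsilon > 0$,
\[
F'(z) = \frac{F'(\epsilon)}{\epsilon}\, z\, \exp\!\left(-\int_\epsilon^z \psi(F(y))\,dy\right).
\]
Letting $\epsilon \to 0$ and invoking the bound $\lim_{\epsilon \to 0} F'(\epsilon)/\epsilon = \xnorm{\Psi \circ F} \leq \xnorm{1} \cdot \sup(\Psi\circ F) $; more efficiently, note $\Psi \circ F \leq 1$ so by \thref{master} \ref{rate} ($\xnorm{F}=1$) one gets $\lim_{\epsilon\to 0}F'(\epsilon)/\epsilon \leq 1$. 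Hence
\[
F'(z) \leq z\exp\!\left(-\int_0^z \psi(F(y))\,dy\right).
\]

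Next I would produce a lower bound on $\psi(F(y))$ uniform in $y$. For the interpolation $\Psi(u) = p_1 u + \sum_{k\geq 2} p_k u^k + p_{-k}(1-(1-u)^k)$ we have $\psi(u) = p_1 + \sum_{k\geq 2} k p_k u^{k-1} + k p_{-k}(1-u)^{k-1}$. Every term is nonnegative for $u \in [0,1]$, and the $p_1$ term contributes a constant $p_1$. Since $F(y) \in [0,1]$, this gives $\psi(F(y)) \geq p_1$ for all $y \geq 0$. Substituting into the displayed bound yields $F'(z) \leq z e^{-p_1 z}$, and therefore
\[
z F'(z) \leq z^2 e^{-p_1 z}.
\]
Finally, maximize $z^2 e^{-p_1 z}$ over $z \geq 0$: the critical point is $z = 2/p_1$, and plugging in gives $z^2 e^{-p_1 z} \leq (4/p_1^2)e^{-2}$. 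This is slightly weaker than the claimed $2e^{-1}/p_1^2$, so I would instead track constants more carefully — noting $4e^{-2} = 4/e^2 < 2/e$ indeed holds ($4/e \approx 1.47 < 2$), so in fact $(4/p_1^2)e^{-2} \leq (2/p_1^2)e^{-1}$, which recovers the stated bound. (Alternatively the factor $2$ in the claim may come from the $\xnorm{\Psi\circ F}$ estimate with the sharper constant from $\Psi\circ F \leq 2F$ in the min-biased case; either route closes the gap.)

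The only genuine subtlety is the $\epsilon \to 0$ limit: one must justify that $F'(\epsilon)/\epsilon$ has a finite limit and identify it via \eqref{eq:integro} and integration by parts, exactly as done in \thref{lem:F'1} and \thref{lem:F'2}. Given those lemmas are already established, this step is routine here; the rest is a one-variable calculus optimization. I do not expect any real obstacle — the proof is a direct adaptation of \thref{lem:zF'} with the role of $\psi(1) = 2p_2$ replaced by the lower bound $\psi(F(y)) \geq p_1$ coming from the uniform component.
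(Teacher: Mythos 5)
Your overall plan matches the paper's: integrate \eqref{eq:diff} to get $F'(z) = \tfrac{F'(\epsilon)}{\epsilon}\, z \exp(-\int_\epsilon^z \psi(F(y))\,dy)$, lower-bound $\psi(F(y)) \geq p_1$ using the uniform component, and optimize $z^2 e^{-p_1 z}$ at $z = 2/p_1$. But there is a genuine gap in how you handle the prefactor $F'(\epsilon)/\epsilon$. You send $\epsilon \to 0$ and claim $\lim_{\epsilon\to 0} F'(\epsilon)/\epsilon = \xnorm{\Psi\circ F} \leq 1$ ``since $\Psi\circ F \leq 1$.'' That implication is false: $\xnorm{G} = \int_0^\infty x^{-2}G(x)\,dx$ is not controlled by a sup bound on $G$ (the integrand blows up at $0$), and the identity $\xnorm{F} = 1$ is special to the size-biased limit $F = F^\Psi$, not to arbitrary distribution functions. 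To compare $\xnorm{\Psi\circ F}$ with $\xnorm{F}$ you would need $\Psi(u) \leq u$ pointwise, which fails precisely when the interpolation has $\min$-$k$ components, since $1-(1-u)^k \geq u$ on $[0,1]$. Indeed, for the pure $\min$-$2$ process the paper's own \thref{lem:F'1} only gets this limit $\leq 2$, and \thref{lem:zF'2} assumes nothing beyond $p_1 > 0$ — in particular no control on $\sum_k k\,p_{-k}$ — so the limit can exceed $1$ and your constant does not close. Your parenthetical fallback ($\Psi\circ F \leq 2F$) is likewise specific to the $2$-interpolation and is not available here.

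The paper sidesteps this entirely: it takes $\epsilon = 1$ (not $\epsilon \to 0$) and uses \thref{lem:F'2}, which gives $F'(z) \leq 1$ for \emph{all} $z$ under only (C) and (D), hence $F'(1)/1 \leq 1$. The price is that the exponential integral then starts at $1$ rather than $0$, producing an extra factor $e^{p_1} \leq e$ in the bound $zF'(z) \leq e^{p_1} z^2 e^{-p_1 z}$, after which the optimization at $z = 2/p_1$ is carried out. So the fix for your argument is simple — replace your unjustified limit step by the evaluation at $\epsilon = 1$ via \thref{lem:F'2} — but as written the first step of your proof does not hold for the class of interpolations the lemma covers.
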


\begin{proof}

Integrate \eqref{eq:diff} as in \cite[Proposition 8.1]{elliot} so that for any $\epsilon >0$
\begin{align*}F'(z) = \f{F'(\epsilon)}{\epsilon}z \exp\left( - \int_\epsilon^z \psi(F(y)) dy \right).\end{align*}
Taking $\epsilon = 1$ and applying \thref{lem:F'2} gives
\begin{align}F'(z) \leq z \exp\left( - \int_1^z \psi(F(y)) dy \right).\label{eq:F'2}\end{align}
Notice that 
\begin{align}\psi(u)  = p_1 + \tsum_{k \geq 2} k [p_k  u^{k-1} + p_{-k}  (1- u)^{k-1}] \geq p_1. \label{eq:psi2}
\end{align}
Apply this to \eqref{eq:F'2} then multiply by $z$ to obtain the bound
\begin{align*}
zF'(z) &\leq   e^{p_1} z^2 e^{-p_1z} %\leq z^2 e^{p_1(1- z)}.
\end{align*}
The maximum of $z^2 e^{-p_1 z}$ is at $z= 2/ p_1$. Plug this in above to obtain the claimed bound.
%\begin{align*}
%zF'(z) \leq e^{p_1} D_\kappa (2/p_1)^2 e^{-2}. %\label{eq:better}
%\end{align*}
%The claimed bound is weaker than that in \eqref{eq:better}. We prefer it for its simplicity.  It follows from the fact that $p_1 \leq 1$ and so $e^{p_1 - 2} < \f 12$. 
\end{proof}

\begin{lemma} \thlabel{lem:pinch2}
If $C_{\mathbf{p}} \leq \f 12$ then \eqref{eq:inequality} holds.
%so long as $(2 D_\kappa^2)^{1/3} \leq p_1$.  
%       \begin{align}\tf 23| z\psi_\kappa'(F^{\Psi_\kappa}(z))(F^{\Psi_\kappa})'(z)  - \psi_\kappa( F^{\Psi_\kappa}(z) )| - \psi_{\kappa}(F^{\Psi_\kappa}(z)) \leq 0 \label{eq:hyp2} \end{align}
\end{lemma}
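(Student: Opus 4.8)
The plan is to mimic the structure of \thref{lem:pinch} but with the more general bounds established in \thref{lem:zF'2} and the constant $C_{\mathbf p}$ in place of $|\psi' |$. First I would apply the triangle inequality to the left side of \eqref{eq:inequality}, reducing the task to finding $\delta \in (0,1]$ such that for all $z \geq 0$
\begin{align*}
|z\psi'(F(z)) F'(z)| + |\psi(F(z))| \leq (2-\delta)\psi(F(z)).
\end{align*}
Since $F' \geq 0$ and $(2-\delta)\psi(F(z)) - |\psi(F(z))| \leq (1-\delta)\psi(F(z))$, this follows once we show $z F'(z)\, |\psi'(F(z))| \leq (1-\delta)\psi(F(z))$. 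Using $\sup_u |\psi'(u)| \leq C_{\mathbf p}$ and the lower bound $\psi(u) \geq p_1$ from \eqref{eq:psi2}, it suffices to prove $C_{\mathbf p}\, z F'(z) \leq (1-\delta) p_1$.

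Next I would split into two cases according to whether $p_1 > 0$. If $p_1 > 0$, I would invoke \thref{lem:zF'2} to get $zF'(z) \leq 2e^{-1}/(p_1)^2$, so it is enough to pick $\delta$ with $2 C_{\mathbf p} e^{-1}/(p_1)^2 \leq (1-\delta)p_1$, i.e.\ $(1-\delta) \geq 2 C_{\mathbf p} e^{-1}/(p_1)^3$; such $\delta \in (0,1]$ exists provided $2 C_{\mathbf p} e^{-1} < (p_1)^3$. Here I should double-check that the hypothesis $C_{\mathbf p}\leq 1/2$ together with the normalization $p_1 + \sum_{k\geq 2}(p_k + p_{-k}) = 1$ forces $p_1$ large enough: since $k(k-1)\geq 2$ for $k \geq 2$, we have $\sum_{k \geq 2}(p_k+p_{-k}) \leq \tfrac12 C_{\mathbf p} \leq 1/4$, hence $p_1 \geq 3/4$, so $(p_1)^3 \geq 27/64 > 2e^{-1}\cdot \tfrac12 = e^{-1}$, giving room to choose $\delta$. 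The remaining case is $p_1 = 0$, where \thref{lem:zF'2} does not apply; but then $C_{\mathbf p} \leq 1/2$ forces $\sum_{k\geq2}(p_k+p_{-k})\leq 1/4 < 1$, contradicting $p_1 + \sum_{k\geq 2}(p_k+p_{-k}) = 1$, so $p_1 = 0$ is impossible under the hypothesis and there is nothing to check. (Alternatively, if one wants to allow $p_1$ small, one can instead use the weaker bound $F'(z) \leq 1$ from \thref{lem:F'2} only on a bounded range and argue directly, but the normalization argument above avoids this.)

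The main obstacle I anticipate is verifying cleanly that the hypothesis $C_{\mathbf p} \leq 1/2$ really does yield a valid $\delta$ — in particular making sure the chain of inequalities $C_{\mathbf p}\, zF'(z) \leq (1-\delta)p_1$ can be satisfied \emph{uniformly in $z$} and that the degenerate endpoints ($z \to 0$, $z \to \infty$, and the possibility $\psi'(F(z))$ changing sign) cause no trouble. The sign of $\psi'$ is handled automatically by taking absolute values and using the uniform bound $C_{\mathbf p}$; the behavior at $z = 0$ is fine because $z F'(z) \to 0$ (indeed $F'$ is bounded near $0$ by \thref{lem:F'2}), and the decay as $z \to \infty$ is exactly what the exponential factor in \thref{lem:zF'2} provides. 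So after assembling these pieces the proof is short; the only real content is the bookkeeping that $C_{\mathbf p}\leq 1/2 \Rightarrow p_1 \geq 3/4 \Rightarrow 2 C_{\mathbf p} e^{-1} < (p_1)^3$, which then lets us take, say, $\delta = 1 - 2 C_{\mathbf p} e^{-1}/(p_1)^3 \in (0,1]$.
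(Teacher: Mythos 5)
Your proposal is correct and follows essentially the same route as the paper: triangle inequality to reduce \eqref{eq:inequality} to $zF'(z)\,|\psi'(F(z))| \leq (1-\delta)\psi(F(z))$, then the bounds $\psi \geq p_1$, $|\psi'| \leq C_{\mathbf p}$, and \thref{lem:zF'2}, closed out by the observation that $C_{\mathbf p}\leq 1/2$ forces $p_1 \geq 3/4$ so that $(p_1)^3 > 2C_{\mathbf p}e^{-1}$. Your explicit treatment of the (vacuous) case $p_1=0$ and the explicit choice $\delta = 1 - 2C_{\mathbf p}e^{-1}/(p_1)^3$ are minor tidy additions, not a different argument.
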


\begin{proof}
%Let $\Psi = \Psi_\kappa$, $\psi = \psi_\kappa$ and $F = F^{\Psi_\kappa}$. 
As in \thref{lem:pinch} it suffices to show
for some $\delta \in (0,1]$ and all $z \geq 0$
\begin{align} z F'(z) \leq \f {(1- \delta)\psi(F(z) )} {|\psi'(F(z))|} . \nonumber %\label{eq:stronger2}
 \end{align}
We have from \eqref{eq:psi2} that $\psi(u) \geq p_1$ and can compute $$|\psi'(u)| \leq \sum_{k \geq 2} k(k-1)| u^{k-2} - (1-u)^{k-2}| \leq C_{\mathbf{p}}.$$ 
It then suffices to prove
\begin{align}
z F'(z) \leq \f{(1- \delta) p_1}{C_{\mathbf{p}}}. \label{eq:end}
\end{align}
%\begin{align} z F'(z) \leq \f {p_{-2} + p_1 - 2 \epsilon} {4\epsilon} \label{eq:pointer}
%\end{align}
By \thref{lem:zF'2} and the hypothesis $C_{\mathbf{p}} \leq 1/2$ it suffices to choose the $p_k$ so that
$$ \f{2e^{-1}}{(p_1)^2}  \leq  2(1- \delta) p_1.$$
Rewriting and letting $\delta \to 0$ we require that $ e^{-1/3} <p_1$. It is easy to verify (by just checking the case $p_k =0$ for $k \neq 1, 2$) that we must have $\sum_{k \neq 1} p_k<1/4$ in order to satisfy $C_{\mathbf p} < 1/2$. Thus, $p_1 > 3/4$. Since $e^{-1/3} \approx .71 < 3/4 =p_1$ the above displayed inequality holds.
\end{proof}

%\begin{remark} \thlabel{remark2}
%    As in \thref{remark} it is again possible to extend to a slightly larger set of solutions. In particular $C_{\mathbf{p}} \leq 1/4$.  \end{remark}
%

\subsubsection*{Acknowledgments}

Much thanks to Elliot Paquette and Pascal Maillard for many useful conversations. The first referee's careful reading and suggestion to generalize to arbitrary $\Psi$ processes are greatly appreciated. Itai Benjamini is the source of a very similar model that spurred this research. Toby Johnson and Bal\'azs Gerencs\'er provided nice suggestions on earlier drafts. I am grateful to my advisor Christopher Hoffman for encouraging me to stick with the problem and his advice to consider a small subinterval. Gerandy Brita Montes de Oca's assistance with reading and understanding \cite{elliot} was very helpful. 
%Austin Tran and Matthew Farhbach deserve commending for deriving the combinatorial identity used in the proof of \thref{prop:formula}. 
Tatiana Toro and Shirshendu Ganguly gave some useful advice about the operator $\mathscr C$. Thanks to Chloe Huber and Chris Fowler for being good listeners about the ups and downs of this project. Lastly, I appreciate the partial support from NSF RTG grant 0838212.

\bibliographystyle{amsalpha}
\bibliography{eqd}

\end{document}